\newtheorem{theorem}{Theorem}[section]
\newtheorem*{theorem*}{Theorem}
\newtheorem{lemma}{Lemma}[section]
\newtheorem{corollary}{Corollary}[section]
\newcommand{\esssup}{\mathop{\mathrm{ess\,sup}}} 
\newcommand{\essinf}{\mathop{\mathrm{ess\,inf}}}
\def\hx{\hat{x}}
\def\hg{\hat{g}}
\def\uu{{\bf u}}
\def\vv{{\bf v}}
\def\gg{{\bf g}}
\def\hvv{\hat{\vv}}
\def\k{\kappa}
\def\O{\Omega}
\def\o{\omega}
\def\ho{\hat{\omega}}
\def\bo{\bar{\omega}}
\def\vfi{\varphi}
\def\vp{\varpi}
\def\di{\mbox{div\,}}
\def\di{\mbox{div\,}}
\def\R{{\mathbb R}}
\def\N{{\mathbb N}}
\def\hO{\hat{\Omega}}
\def\W1p{W^{1,p}(\o)}
\def\divp{\mbox{(div)}_p}
\begin{document}
%\date{\today}
\section*{}

\setcounter{equation}{0}
\title[]
{A decomposition technique for integrable functions\\ with applications to the divergence problem}

\author[F. L\'opez Garc\'\i a]{Fernando L\'opez Garc\'\i a}
\address{Department of Mathematical Sciences\\ Worcester Polytechnic Institute \\ 
100 Institute Road \\ Worcester \\ MA 01609 \\ USA.} \email{flopezgarcia@wpi.edu}

\keywords{Decomposition, divergence problem, Stokes equations, bad domains, H\"older-$\alpha$ domains, cuspidal domains, weighted Sobolev spaces}

\subjclass[2010]{Primary: 26D10 ; Secondary 35F05, 35Q30}

\begin{abstract}
Let $\Omega\subset \mathbb{R}^n$ be a bounded domain that can be written as $\Omega=\bigcup_{t} \Omega_t$, where $\{\Omega_t\}_{t\in\Gamma}$ is a countable collection of domains with certain properties. In this work, we develop a technique to decompose a function $f\in L^1(\Omega)$, with vanishing mean value, into the sum of a collection of functions $\{f_t-\tilde{f}_t\}_{t\in\Gamma}$ subordinated to $\{\Omega_t\}_{t\in\Gamma}$ such that $Supp\,(f_t-\tilde{f}_t)\subset\Omega_t$ and $\int f_t-\tilde{f}_t=0$.  
As an application, we use this decomposition to prove the existence of a solution in weighted Sobolev spaces of the divergence problem $\di\uu=f$  and the well-posedness of the Stokes equations on H\"older-$\alpha$ domains and  some other domains with an external cusp arbitrarily narrow. We also consider arbitrary bounded domains. The weights used in each case depend on the type of domain.
\end{abstract}
\maketitle

\section{Introduction}
\label{intro}

In this paper we show a kind of atomic decomposition for an integral function $f\in L^1(\O)$ if $\O$ is a bounded domain which can be written as the union of a countable collection of domains $\{\O_t\}_{ t\in \Gamma}$ with certain properties. This result is based on a decomposition developed by Bogovskii in \cite{B}, where $\Gamma$ is finite. The goal of this result is to write a function $f$ with $\int f=0$ as the sum of a collection of functions $\{f_t-\tilde{f}_t\}_{ t\in \Gamma}$ such that $Supp\{f_t-\tilde{f}_t\}\subset \O_t$ and $\int_{\O_t} f_t-\tilde{f}_t=0$. As Bogovskii did in his paper we use this decomposition to study the existence of solutions of the divergence problem, and posteriorly the well-posedness of the Stokes equations. 

Let us introduce the divergence problem for a bounded domain $\O\subset\R^n$. Given $f\in L^p(\O)$,  with vanishing mean value and $1 < p < \infty$, the divergence problem deals with the existence of a solution $\uu$ in the Sobolev space $W_0^{1,p}(\O)^n$ of $\di\uu=f$ satisfying
\begin{eqnarray}\label{Introduction div}
\|D\uu\|_{L^p(\O)}\leq C_{\O} \|f\|_{L^p(\O)},
\end{eqnarray}
where $D\uu$ is the differential matrix of $\uu$. This problem has been widely studied and it has many applications, for example, in the particular case $p=2$, it is fundamental for the variational analysis of the Stokes equations (see \cite{G}). It is also well known for its relation with some inequalities such as Korn and Sobolev Poincar\'e. 

Consequently, several methods have been developed to prove the existence of a solution of $\di\uu=f$ satisfying (\ref{Introduction div}) under different assumptions on the domain (see for example \cite{ADM}, \cite{ASV}, \cite{B}, \cite{DRS}, \cite{DM}, \cite{L}).

On the other hand, this result fails if $\O$ has an external cusp or arbitrarily narrow ``corridors", see \cite{ADLg} and \cite{F}.
 However, the existence of solutions of the divergence problem holds in some of these ``bad" domains if we consider weighted Sobolev spaces with an estimate weaker than (\ref{Introduction div}).  A similar analysis can be done for its related results. Since the non-existence of standard solutions arises because of the bad behavior of the boundary, it seems natural to work with weights involving the distance to the boundary of $\O$ or a subset of it. The following are some papers considering the divergence problem or related results in weighted Sobolev spaces \cite{ADL}, \cite{DRS}, \cite{DL1}, \cite{DL2} and \cite{S}.

As we mentioned before there are many different approaches to this problem. In the present paper, as it was done in \cite{DRS} and \cite{DMRT}, 
we use a decomposition of the function $f$ in $\di\uu=f$ to generalize results valid on simple domains, such as rectangles or star-shaped domains, to more general cases.  

\bigskip

The paper is organized in the following way: In Section \ref{preliminaries}, we include some notations and preliminary results. In Section \ref{DecompositionTechnique}, we show the main result of this paper, a decomposition technique for integrable functions defined over a bounded domain $\O$ which is written as the union of a collection of subdomains $\{\O_t\}_{t\in \Gamma}$ with some properties. The set $\Gamma$ is required to have a certain partial order structure. In the following sections we include three different applications of the decomposition developed in Section \ref{DecompositionTechnique}. These sections can be independently read. In section \ref{General domains}, we show the existence of a weighted right inverse of the divergence operator on arbitrary bounded domains. In Sections \ref{Cuspidal domains} and \ref{Holder}, we prove the existence of a solution of the divergence problem and the well-posedness of the Stokes equations on some domains with an external cusp arbitrarily narrow and on bounded H\"older-$\alpha$ domains in $\R^n$. The weights in these two final sections are more specific than the one used in Section \ref{General domains}. More precisely,  the weights are related to the distance to the cusp and to the distance to the boundary of the domain respectively.

\section{Preliminaries and notations}\label{preliminaries}

Let $\O\subset\R^n$ be a bounded domain. Given a measurable positive function $\o:\O\to R_{>0}$ we denote with $L^p(\O,\o)$ the weighted space with norm
\[\|f\|_{L^p(\O,\o)}=\|f\o\|_{L^p(\O)},\]
and with $W^{1,p}_0(\O,\o)$ the weighted Sobolev space defined as the closure of  $C_0^\infty(\O)$ with norm \[\|u\|_{W_0^{1,p}(\O,\o)}=\|Du\|_{L^p(\O,\omega)},\]
where $Du$ is the differential matrix of $u$. Observe that the seminorm $\|Du\|$ results a norm in the trace zero space.

We say that $\O$ satisfies $\divp$, for $1<p<\infty$, with constant $C_\O$ if for any $f\in L^p_0(\O):=\{g\in L^p(\O)\,:\, g\text{ has vanishing mean value }\}$ there is a solution $\uu\in W^{1,p}_0(\O)^n$ of $\di\uu=f$ satisfying (\ref{Introduction div}). We also use $C_A$ to denote a constant depending on $A$, where $A$ could not be a domain.

In the next lemma we compare $C_\O$ with $C_{\hO}$, where $\O$ is a domain obtained by applying an affine function to a domain $\hO$ satisfying $\divp$. This result is standard and the proof uses the Piola transform. 
Before stating the lemma, let us define the $p$-norm of a matrix $A\in R^{n\times n}$ as \[\|A\|_p:=\sup_{\vv\neq {\bf 0}}\frac{\|A\vv\|_p}{\|\vv\|_p},\]
where the $p$-norm of a vector is $\|\vv\|^p_p=\sum_{1\leq i\leq n}|v_i|^p$.

\begin{lemma}\label{rectangles} Let $\hO\subset\R^n$ be a domain satisfying ${\mbox (\di)}_p$ and $F:\R^n\to\R^n$ an affine function defined by $F(\hx)=B\hx+b$, where $B\in R^{n\times n}$ is an invertible matrix and $b\in\R^n$. Then, $\O=F(\hO)$ satisfies $\divp$ with a constant $C_\O$ bounded by \[C_\O\leq \|B\|_p\|(B^{-1})^\prime \|_pC_{\hO}.\]
In particular, $C_{\O}=C_{\hO}$ if $B=\lambda I$, where $I$ is the identity matrix and $\lambda\neq 0$ is a real number.
\end{lemma}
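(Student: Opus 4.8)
The plan is to transport a solution on $\hO$ to $\O$ by means of the (contravariant) Piola transform associated with $F$, and then to track how the relevant $L^p$ norms change under the affine map. Fix $f\in L^p_0(\O)$ and set $\hf:=(\det B)\,(f\circ F)$ on $\hO$. A change of variables gives $\int_{\hO}\hf\,d\hx=(\det B)\int_{\hO}(f\circ F)\,d\hx=(\det B)\,|\det B|^{-1}\int_{\O}f\,dx=0$, so $\hf\in L^p_0(\hO)$. Since $\hO$ satisfies $\divp$, there is $\huu\in W^{1,p}_0(\hO)^n$ with $\di\huu=\hf$ and $\|D\huu\|_{L^p(\hO)}\le C_{\hO}\,\|\hf\|_{L^p(\hO)}$.

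Next I would define $\uu$ on $\O=F(\hO)$ by the Piola transform $\uu(x):=\tfrac{1}{\det B}\,B\,\huu(F^{-1}(x))$. Because $DF\equiv B$ is constant, the chain rule yields the pointwise identities $\di\uu(x)=\tfrac{1}{\det B}\,\di\huu(F^{-1}(x))$ and $D\uu(x)=\tfrac{1}{\det B}\,B\,D\huu(F^{-1}(x))\,B^{-1}$; the first of these gives $\di\uu=\tfrac{1}{\det B}\,\hf\circ F^{-1}=f$. Moreover $\uu\in W^{1,p}_0(\O)^n$, since $\huu$ has zero trace and the Piola transform is just the composition of the affine diffeomorphism $F^{-1}$ with multiplication by the constant matrix $\tfrac{1}{\det B}B$. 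Thus $\uu$ is an admissible solution of $\di\uu=f$ on $\O$, and it only remains to estimate $\|D\uu\|_{L^p(\O)}$.

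For the estimate I would change variables back to $\hO$ using $dx=|\det B|\,d\hx$:
\[
\|D\uu\|_{L^p(\O)}^p=\frac{1}{|\det B|^{\,p}}\int_{\hO}\|B\,D\huu(\hx)\,B^{-1}\|_p^p\,|\det B|\,d\hx=\frac{1}{|\det B|^{\,p-1}}\int_{\hO}\|B\,D\huu(\hx)\,B^{-1}\|_p^p\,d\hx.
\]
The crux reduces to a pointwise matrix inequality of the form $\|B\,M\,B^{-1}\|_p\le \|B\|_p\,\|(B^{-1})'\|_p\,\|M\|_p$, the left factor coming from estimating the left multiplication by $B$ and the factor $\|(B^{-1})'\|_p$ from handling the right multiplication by $B^{-1}$ after passing to transposes. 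This is the one point requiring care: the asymmetric appearance of the transpose on the right factor depends on how the norm of the differential matrix is measured, and reading off precisely this constant is the only nonroutine step; the rest is bookkeeping of Jacobians. Inserting this bound and combining it with $\|\hf\|_{L^p(\hO)}=|\det B|^{1/p'}\|f\|_{L^p(\O)}$ (where $\tfrac1p+\tfrac1{p'}=1$) and the estimate for $\huu$, the powers of $|\det B|$ cancel exactly and one obtains $\|D\uu\|_{L^p(\O)}\le \|B\|_p\,\|(B^{-1})'\|_p\,C_{\hO}\,\|f\|_{L^p(\O)}$, i.e.\ $C_\O\le\|B\|_p\,\|(B^{-1})'\|_p\,C_{\hO}$. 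In the special case $B=\lambda I$ one has $B\,M\,B^{-1}=M$ and $\|B\|_p\|(B^{-1})'\|_p=|\lambda|\cdot|\lambda|^{-1}=1$, so the scaling factors disappear and $C_\O\le C_{\hO}$; applying the same argument to $F^{-1}$ yields the reverse inequality and hence $C_\O=C_{\hO}$.
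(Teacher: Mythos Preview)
Your proof is correct and follows essentially the same Piola-transform approach as the paper. The only cosmetic difference is that you carry the factor $\det B$ explicitly (defining $\hf=(\det B)\,f\circ F$ and $\uu=\tfrac{1}{\det B}B\,\huu\circ F^{-1}$), whereas the paper omits it (taking $\hg=f\circ F$ and $\uu=B\,\hvv\circ F^{-1}$); in both versions the Jacobian factors cancel in the final estimate, and the key pointwise bound $\|BMB^{-1}\|_p\le\|B\|_p\,\|(B^{-1})'\|_p\,\|M\|_p$ (with the entrywise $\ell^p$ norm on $M$, obtained by treating left multiplication columnwise and right multiplication rowwise via transposes, exactly as you indicate) is the same.
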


\begin{proof}
In order to simplify the notation we assume that all the vectors are in $\R^{n\times 1}$. Given $f\in L^p_0(\O)$, the function $\hg(\hx)=f(F(\hx))$ belongs to $L^p_0(\hO)$. Thus, we define the vector field $\uu(x):=B\hvv(F^{-1}(x))$, where $\hvv\in W_0^{1,p}(\hO)^n$ is a solution of $\di\hvv=\hg$, with  \[\|\hvv\|_{W_0^{1,p}(\hO)}\leq C_{\hO} \|\hg\|_{L^p(\hO)}.\]  It can be seen that the differential matrix of $\uu$ is $D\uu(x)=B\,D\hvv(F^{-1}(x))\,B^{-1}$, and as the trace is invariant under conjugation we can assert that 
$\di\uu(x)=\di\hvv(F^{-1}(x))=f(x)$.
On the other hand, using change of variables it can be seen that 

\begin{eqnarray*}
\|D\uu\|_{L^p(\O)}&\leq& \|B\|_p\|(B^{-1})^\prime\|_p{\rm det}(B)^{1/p}\|D\hvv\|_{L^p(\hO)}\\
&\leq& \|B\|_p\|(B^{-1})^\prime\|_p{\rm det}(B)^{1/p}C_{\hO}\|\hg\|_{L^p(\hO)}=\|B\|_p\|(B^{-1})^\prime\|_pC_{\hO}\|f\|_{L^p(\O)}.
\end{eqnarray*}
\end{proof}

As we mentioned before an important application of the existence of a right inverse of the divergence operator is the well-posedness of the Stokes equations, given by:
\begin{align}\label{Stokes}
\begin{cases}
-\Delta\uu+\nabla p&=\gg  \quad \text{in $\O$}\\ 
\di\uu&=h  \quad \text{in $\O$}\\
\uu&=0  \quad \text{on $\partial\O$}. 
\end{cases}
\end{align}
For a bounded Lipschitz domain $\O\subset\R^n$ (or more generally a John domain \cite{ADM}), it is known that, if $\gg\in H^{-1}(\O)^n$, the dual space of $H^1_0(\O)^n$, and $h\in L^2(\O)$ with vanishing mean value, there exists a unique variational solution $(\uu,p)$ in $H_0^1(\O)^n\times L^2(\O)$. Moreover, this solution satisfies the a priori estimate:
\begin{eqnarray*}
\|D\uu\|_{L^2(\O)}+\|p\|_{L^2(\O)}\leq C\left(\|\gg\|_{H^{-1}(\O)}+\|h\|_{L^2(\O)}\right),
\end{eqnarray*}
where the constant $C$ depends only on $\O$ .

On the other hand, it is known that this result fails in general for domains with external cusps. However, it was proved in \cite{DL2} that the well-posedness of the incompressible Stokes equations ($h=0$ in (\ref{Stokes})) is valid in weighted Sobolev spaces for an arbitrary bounded domain $\O$ if there exists a standard solution of $\di\uu=f$, where $f$ is in a weighted Sobolev spaces. This result is stated bellow.

\begin{theorem*}\label{Theorem DL2} Let $\omega\in L^1(\O)$ be a positive function. Assume that for any $f\in L^2(\O,\omega^{-1/2})$, with vanishing mean value, 
there exists $\uu\in H_0^1(\O)^n$ such that $\di\uu = f$ and
\[\|D\uu\|_{L^2(\O)} \leq C\|f\|_{L^2(\O,\omega^{-1/2})},\]
with a constant $C$ depending only on $\O$ and $\omega$. Then, for any $\gg\in H^{-1}(\O)^n$, there exists a unique $(\uu, p)\in H_0^1(\O)^n\times L^2(\O,\omega^{1/2})$, with $\int_\O p\omega=0$, incompressible solution of the Stokes problem (\ref{Stokes}). Moreover,
\[\|D\uu\|_{L^2(\O)} + \|p\|_{L^2(\O,\omega^{1/2})} \leq C\|\gg\|_{H^{-1}(\O)},\] 
where $C$ depends only on $\O$ and $\omega$.
\end{theorem*}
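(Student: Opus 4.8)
The plan is to follow the classical saddle-point strategy for the Stokes system, but to carry it out ``by hand'' so as to avoid pairing $p$ against $\di\vv$ directly: note that $p\in L^2(\O,\o^{1/2})$ need not lie in $L^2(\O)$, so the bilinear form $(\vv,q)\mapsto\int_\O q\,\di\vv$ is not defined on all of $H_0^1(\O)^n\times L^2(\O,\o^{1/2})$. First I would fix the velocity. Let $V:=\{\vv\in H_0^1(\O)^n:\di\vv=0\}$, a closed subspace of the Hilbert space $H_0^1(\O)^n$ equipped with the inner product $(\uu,\vv)\mapsto\int_\O D\uu:D\vv$, which induces the norm $\|D\cdot\|_{L^2(\O)}$. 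Since $\vv\mapsto\langle\gg,\vv\rangle$ is a bounded functional on $V$ with norm at most $\|\gg\|_{H^{-1}(\O)}$, the Riesz representation theorem gives a unique $\uu\in V$ with $\int_\O D\uu:D\vv=\langle\gg,\vv\rangle$ for every $\vv\in V$, together with $\|D\uu\|_{L^2(\O)}\le\|\gg\|_{H^{-1}(\O)}$. This $\uu$ is incompressible and is the velocity of the sought solution.

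Next I would recover the pressure, and this is where the hypothesis enters. Let $S:L_0^2(\O,\o^{-1/2})\to H_0^1(\O)^n$ be any right inverse of the divergence furnished by the assumption, so that $\di(Sf)=f$ and $\|D(Sf)\|_{L^2(\O)}\le C\|f\|_{L^2(\O,\o^{-1/2})}$, where $L_0^2$ denotes the mean-value-zero subspace. Consider the functional $G(\vv):=\langle\gg,\vv\rangle-\int_\O D\uu:D\vv$ on $H_0^1(\O)^n$; by construction $G$ vanishes on $V=\ker\di$, and $|G(\vv)|\le 2\|\gg\|_{H^{-1}(\O)}\|D\vv\|_{L^2(\O)}$. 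Define $\ell(f):=G(Sf)$ for $f\in L_0^2(\O,\o^{-1/2})$. Because $G$ annihilates $V$, the value $\ell(f)$ is independent of the particular right inverse chosen (two choices differ by an element of $V$), so $\ell$ is well defined and linear, and the divergence estimate yields $|\ell(f)|\le C\|\gg\|_{H^{-1}(\O)}\|f\|_{L^2(\O,\o^{-1/2})}$.

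I would then identify $\ell$ with a function $p$. Under the substitution $g\mapsto g\o^{1/2}$ the pairing $\int_\O fg$ exhibits $L^2(\O,\o^{1/2})$ as the dual of $L^2(\O,\o^{-1/2})$, and on the closed subspace $L_0^2(\O,\o^{-1/2})$ the annihilator consists exactly of the constants, which lie in $L^2(\O,\o^{1/2})$ since $\o\in L^1(\O)$. Hence $\ell$ is represented by some $p\in L^2(\O,\o^{1/2})$, unique modulo constants, with $\int_\O pf=\ell(f)$ for all $f\in L_0^2(\O,\o^{-1/2})$. The minimal-norm representative is precisely the one with $\int_\O p\o=0$, since minimizing $\int_\O(p+c)^2\o$ over $c$ forces $c=-\int_\O p\o/\int_\O\o$; for this representative $\|p\|_{L^2(\O,\o^{1/2})}=\|\ell\|\le C\|\gg\|_{H^{-1}(\O)}$. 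Combined with the velocity bound, this is the asserted a priori estimate.

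Finally I would check that $(\uu,p)$ solves the system and is unique. For every $\vv\in H_0^1(\O)^n$ with $\di\vv\in L^2(\O,\o^{-1/2})$ (the natural class in which the pressure pairing $\int_\O p\,\di\vv$ converges, by the weighted Cauchy--Schwarz inequality) one writes $\vv=(\vv-S(\di\vv))+S(\di\vv)$ with $\vv-S(\di\vv)\in V$; testing $\uu$ against the first summand and using $\ell(\di\vv)=\int_\O p\,\di\vv$ for the second yields $\int_\O D\uu:D\vv-\int_\O p\,\di\vv=\langle\gg,\vv\rangle$, the weak momentum equation, the sign of $p$ being fixed by orienting $\ell$. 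Together with $\di\uu=0$ this is the incompressibility-constrained weak form of (\ref{Stokes}). Uniqueness follows by the same two steps: two solutions share the same velocity by coercivity on $V$, and the difference of their pressures pairs to zero against all of $L_0^2(\O,\o^{-1/2})$, hence is constant, and the normalization $\int_\O p\o=0$ forces it to vanish. The main obstacle is not any single estimate but the bookkeeping around the pressure: since $p\notin L^2(\O)$ one cannot invoke the textbook Babu\v{s}ka--Brezzi theorem verbatim, and care is needed to make $\int_\O p\,\di\vv$ meaningful, to select the correct normalization, and to recognize that the hypothesis plays exactly the role of the weighted inf--sup (Ne\v{c}as) inequality that is otherwise unavailable on such irregular domains.
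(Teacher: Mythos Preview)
The paper does not contain a proof of this theorem: it is quoted in the preliminaries as a result established in \cite{DL2} and is thereafter invoked as a black box in the applications to the Stokes equations (Theorems~\ref{Stokes in Cuspidal} and~\ref{Stokes on Holder}). There is therefore no argument in this paper to compare your proposal against.

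For what it is worth, your approach is the standard one for results of this type and is sound. You obtain the velocity by Riesz representation on the closed divergence-free subspace $V\subset H_0^1(\O)^n$, and then recover the pressure by realizing the residual functional $G$ as an element of the dual of $L_0^2(\O,\o^{-1/2})$ via the assumed right inverse of the divergence --- this is exactly the weighted inf--sup mechanism. Your remark that $\ell(f)=G(Sf)$ is well defined and \emph{linear} even though the hypothesis does not promise a linear $S$ (any two choices of $Sf$ differ by an element of $V$, which $G$ annihilates) is the right observation. The identification of the annihilator of $L_0^2(\O,\o^{-1/2})$ inside $L^2(\O,\o^{1/2})$ with the constants, and the normalization $\int_\O p\,\o=0$ as the minimal-norm representative, are correct and use precisely the assumption $\o\in L^1(\O)$. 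The only point worth tightening is the scope of the weak momentum equation: $\int_\O p\,\di\vv$ is guaranteed to converge only when $\di\vv\in L^2(\O,\o^{-1/2})$, so to test against all of $C_c^\infty(\O)^n$ (and hence read $-\Delta\uu+\nabla p=\gg$ distributionally) one implicitly wants $\o^{-1}\in L^1_{\rm loc}(\O)$. This holds in every application in the present paper, where $\o$ is a power of the distance to the boundary, but it is not a formal consequence of $\o>0$ a.e.\ and $\o\in L^1$.
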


\section{A decomposition technique for integrable functions} \label{DecompositionTechnique}

We start this section with an example of Bogovskii's decomposition when $\O$ is a domain written as the union of a collection of subdomains $\{\O_i\}_{0\leq i\leq 2}$. We present the example using our notation. Let $f\in L^p(\O)$ be a function with vanishing mean value. Thus, using a partition of the unity $\{\phi_i\}_{0\leq i\leq 2}$ subordinated to $\{\O_i\}_{0\leq i\leq 0}$ we can write $f$ as: 
\[f=f_0+f_1+f_2=f\phi_0+f\phi_1+f\phi_2.\]
Now, 
\begin{eqnarray*}
f=f_0+\left(f_1+\dfrac{\chi_{B_2}}{|B_2|}\int_{\O_2}f_2\right) + \underbrace{\left(f_2-\dfrac{\chi_{B_2}}{|B_2|}\int_{\O_2}f_2\right)}_{f_2-\tilde{f}_2},
\end{eqnarray*}
where $B_2=\O_2\cap \O_1$. Note that the function $f_2-\tilde{f}_2$ has its support in $\O_2$ and $\int f_2-\tilde{f}_2=0$. Finally, we repeat the process with the first two functions. Thus, if $B_1=\O_1\cap \O_0$ we have that 
\begin{eqnarray}\label{Bogovskii}
f&=&\overbrace{\left(f_0+\dfrac{\chi_{B_1}}{|B_1|}\int_{\O_1\cup\O_2}f_1+f_2\right)}^{f_0-\tilde{f}_0}\\ 
&+&\underbrace{\left(f_1+\dfrac{\chi_{B_2}}{|B_2|}\int_{\O_2}f_2-\dfrac{\chi_{B_1}}{|B_1|}\int_{\O_1\cup\O_2}f_1+f_2\right)}_{f_1-\tilde{f}_1} + \underbrace{\left(f_2-\dfrac{\chi_{B_2}}{|B_2|}\int_{\O_2}f_2\right)}_{f_2-\tilde{f}_2},\notag
\end{eqnarray}
obtaining the claimed decomposition. Note that we have used the vanishing mean value of $f$ only to prove that $f_0-\tilde{f}_0$ integrates zero. If we do not assume any other thing but integrability, we have that $\int f_i-\tilde{f}_i=0$ if $i\neq 0$ and $\int f_0-\tilde{f}_0=\int f$.

In this work we extend the decomposition shown in (\ref{Bogovskii}) when $\O$ is the union of a collection of subdomains $\{\O_t\}_{t\in \Gamma}$, where $\Gamma$ is a partial ordered countable set instead of a totally ordered finite set. In fact, $\Gamma$ is a rooted tree and the partial order is inherited from the graph structure. 

Let us recall some definitions. A {\bf rooted tree} is a connected graph in which any two vertices are connected by exactly one simple path, and a {\bf root} is simply a distinguished vertex $a\in\Gamma$. For these graphs it is possible to define a {\bf partial order} $\preceq$ as  $s\preceq t$ if and only if the unique path connecting $t$ with the root $a$ passes through $s$. Moreover, the {\bf height} or {\bf level} of any $t\in \Gamma$ is the number of vertices in $\{s\in\Gamma\,:\,s\preceq t\text{ with }s\neq t\}$. {\bf The parent} of a vertex $t\in \Gamma$ is the vertex $s$ satisfying that $s\preceq t$ and its height is one unit lesser than the height of $t$. We denote the parent of $t$ by $t_p$. 
It can be seen that each $t\in\Gamma$ different from the root has a unique parent, but several elements on $\Gamma$ could have the same parent.

\subsection{A ``tree'' of domains}\label{def tree}

Our decomposition for functions in $L^1(\O)$ is subordinated to a given decomposition of $\O$, which has to satisfy certain properties stated below. Thus, let $\{\O_t\}_{t\in \Gamma}$ be a countable collection of subdomains of $\O$, where $\Gamma$ is a tree with root $a$, that satisfies the following properties: 
\begin{enumerate}
\item[{\bf (a)}]\label{a} $\displaystyle{\chi_\O(x)\leq \sum_{t\in\Gamma}\chi_{\O_t}(x)\leq N\chi_\O(x)}$, for almost every $x\in \Omega$.
\item[{\bf (b)}]\label{b} For any $t\in \Gamma$ different from the root there exists a set $B_t\subseteq\O_{t}\cap\O_{t_p}$ with no trivial Lebesgue measure. In addition, the collection $\{B_t\}_{t\neq a}$ is pairwise disjoint.
\end{enumerate}

Now, for this collection, we define $\{W_t\}_{t\in \Gamma}$ as $\displaystyle{W_t=\bigcup_{s\succeq t} \O_s}$. Finally, let us denote the characteristic function of $B_t$ by $\chi_t$.

\subsection{A decomposition on a ``tree'' of domains}\label{tree subsection}

Let $\{\phi_t\}_{t\in\Gamma}$ be a partition of the unity subordinated to $\{\O_t\}_{t\in\Gamma}$. Thus, $f$ can be decomposed into $f=\sum_{t\in\Gamma} f_t$, where $f_t=f\phi_t$, and 
\begin{equation*}
\sum_{t\in\Gamma}\|f_t\|^p_{L^p(\O_t)} \leq N \|f\|^p_{L^p(\O)}.
\end{equation*}

Thus,  similarly to (\ref{Bogovskii}), we define $\tilde{f}_t$ for $t\in\Gamma$ as
\begin{eqnarray}\label{tree decomposition}
\tilde{f}_t(x):=\dfrac{\chi_t(x)}{|B_t|}\int_{W_t}\sum_{k\succeq t}f_k-\sum_{s:\,s_p=t}\dfrac{\chi_{s}(x)}{|B_{s}|}\int_{W_{s}}\sum_{k\succeq s}f_k,
\end{eqnarray}
where the second sum is indexed over all the $s\in\Gamma$ such that $t$ is the parent of $s$. In the particular case when $t$ is the root of $\Gamma$, the formula (\ref{tree decomposition}) means 
\[\tilde{f}_a(x)=-\sum_{s:\,s_p=a}\dfrac{\chi_{s}(x)}{|B_{s}|}\int_{W_{s}}\sum_{k\succeq s}f_k.\]

In the next theorem we prove that $f$ can be written as $\sum_{t\in\Gamma} f_t-\tilde{f}_t$, and some other properties of the decomposition, but before that let us define an important operator and prove its continuity. Let $T:L^p(\O)\to L^p(\O)$ be the operator defined by
\begin{eqnarray}\label{Ttree}
Tf(x):=\sum_{a\neq t\in\Gamma}\dfrac{\chi_{t}(x)}{|W_t|}\int_{W_t}|f|.
\end{eqnarray} 

\begin{lemma}\label{Ttreecont} The operator $T:L^p(\O)\to L^p(\O)$ defined in (\ref{Ttree}) is weak $(1,1)$ continuous and strong $(p,p)$ continuous for $1<p\leq \infty$ with
\[\|T\|_{L^p\to L^p} \leq 2\left(\dfrac{pN}{p-1}\right)^{1/p}.\]
\end{lemma}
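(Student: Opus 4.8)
The plan is to obtain the stated bound by interpolating between the two endpoints $p=1$ and $p=\infty$; indeed, the explicit constant $2\left(\tfrac{pN}{p-1}\right)^{1/p}$ is exactly what the Marcinkiewicz interpolation theorem produces out of a weak $(1,1)$ bound with constant $N$ and a strong $(\infty,\infty)$ bound with constant $1$. The observation that makes everything tractable is that, since the sets $\{B_t\}_{t\neq a}$ are pairwise disjoint by property {\bf (b)}, for almost every $x$ at most one term of the sum defining $Tf$ in (\ref{Ttree}) is nonzero. Hence $Tf$ is constant on each $B_t$, equal to the average $\frac{1}{|W_t|}\int_{W_t}|f|$, it vanishes off $\bigcup_{t\neq a} B_t$, and $T$ is plainly subadditive and positively homogeneous, so Marcinkiewicz applies.

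The $L^\infty$ endpoint is immediate: on $B_t$ we have $|Tf(x)| = \frac{1}{|W_t|}\int_{W_t}|f|\le \|f\|_{L^\infty(\O)}$, so $\|Tf\|_{L^\infty(\O)}\le\|f\|_{L^\infty(\O)}$; this already settles the case $p=\infty$ with constant $1\le 2$.

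The core of the argument is the weak $(1,1)$ estimate with constant $N$, and this is the step I expect to be the main obstacle, since the sets $W_t$ overlap heavily along each branch of the tree (so that $T$ is genuinely unbounded on $L^1$ in general and the naive bound on $\sum_{t}\chi_{W_t}$ fails). Fix $\lambda>0$ and let $S=\{t\neq a:\ \frac{1}{|W_t|}\int_{W_t}|f|>\lambda\}$. By the previous remark $\{Tf>\lambda\}=\bigsqcup_{t\in S}B_t$, hence $|\{Tf>\lambda\}|=\sum_{t\in S}|B_t|$. I would then pass to the set $S_{\min}$ of $\preceq$-minimal elements of $S$: every $t\in S$ has a unique minimal ancestor $m(t)\in S_{\min}$ lying in $S$, and since $t\succeq m(t)$ gives $B_t\subseteq \O_t\subseteq W_t\subseteq W_{m(t)}$, the disjointness of the $B_t$ yields $\sum_{t:\,m(t)=m}|B_t|\le |W_m|$ for each $m\in S_{\min}$. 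Summing over $m$ and using $|W_m|<\frac{1}{\lambda}\int_{W_m}|f|$ for $m\in S\supseteq S_{\min}$, I obtain
\[
|\{Tf>\lambda\}|\le \sum_{m\in S_{\min}}|W_m| < \frac{1}{\lambda}\int_\O |f|\sum_{m\in S_{\min}}\chi_{W_m}.
\]
It remains to bound the overlap $\sum_{m\in S_{\min}}\chi_{W_m}$ by $N$, which is exactly where property {\bf (a)} enters: if $y\in W_m$ then $y\in\O_s$ for some $s\succeq m$, and because $S_{\min}$ is an antichain, distinct $m$'s force distinct such indices $s$ (a common $s\succeq m_1,m_2$ would make $m_1,m_2$ comparable). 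Thus $\#\{m\in S_{\min}:\,y\in W_m\}\le \#\{s:\,y\in\O_s\}\le N$ for a.e. $y$, giving $|\{Tf>\lambda\}|\le \frac{N}{\lambda}\|f\|_{L^1(\O)}$.

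Finally, I would run the standard Marcinkiewicz splitting $f=f\chi_{\{|f|\le c\lambda\}}+f\chi_{\{|f|>c\lambda\}}$, using the $L^\infty$ bound to absorb the first piece into $\{|Tf|>\lambda\}$ and the weak $(1,1)$ bound on the second piece, then integrating $p\int_0^\infty\lambda^{p-1}|\{|Tf|>\lambda\}|\,d\lambda$ and optimizing the threshold $c$. With endpoint constants $1$ and $N$ this produces precisely $\|Tf\|_{L^p(\O)}\le 2\left(\tfrac{pN}{p-1}\right)^{1/p}\|f\|_{L^p(\O)}$ for $1<p<\infty$, completing the proof.
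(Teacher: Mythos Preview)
Your proof is correct and follows essentially the same route as the paper: an $(\infty,\infty)$ bound with constant $1$, a weak $(1,1)$ bound with constant $N$ obtained by passing to the antichain of $\preceq$-minimal indices where the average exceeds $\lambda$, and then Marcinkiewicz interpolation. The paper's $\Gamma_0$ is exactly your $S_{\min}$, and the overlap estimate $\sum_{m\in S_{\min}}\chi_{W_m}\le N$ you spell out via the injectivity argument is precisely what the paper invokes (more tersely) when it cites property {\bf (a)}.
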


\begin{proof} 
We prove first that $T$ is strong $(\infty,\infty)$ continuous and weak $(1,1)$ continuous. Then, using an interpolation theorem, we extend the result to all $1<p<\infty$. 

$T$ is an average of $f$ when it is not zero, thus by a straightforward calculation, it can be proved that $T$ is continuous  from $L^\infty$ to $L^\infty$, with norm $\|T\|_{L^\infty\to L^\infty} \leq 1$. In order to prove the weak $(1,1)$ continuity, and given $\lambda>0$, we define the subset of minimal vertices $\Gamma_0\subseteq \Gamma$ as 
\begin{eqnarray*}
\Gamma_0:=\left\{t\in \Gamma\,:\,\frac{1}{|W_t|}\int_{W_t}|f|>\lambda\text{ and }\frac{1}{|W_s|}\int_{W_s}|f|\leq\lambda\text{ for all }s\preceq t \text{ different from }t\right\}.
\end{eqnarray*}
Thus, 
\begin{eqnarray*}
|\{x\in\O\,:\,Tf(x)>\lambda\}|&\leq& \sum_{t\in \Gamma_0}|W_{t}|\\
&<& \frac{1}{\lambda} \sum_{t\in\Gamma_0}\int_{W_{t}}|f|\leq \dfrac{N}{\lambda} \|f\|_{L^1(\O)},
\end{eqnarray*}
where $N$ was defined in {\bf (a)} on page \pageref{a} and it controls the overlapping of the collection $\{\O_t\}$.
Thus, $T$ is weak $(1,1)$ continuous with norm lesser than or equal to $N$.

Finally, using Marcinkiewicz interpolation (see Theorem 2.4 on \cite{D}) $T$ is strong $(p,p)$ continuous, and its norm is lesser than $2\left(\dfrac{pN}{p-1}\right)^{1/p}$.
\end{proof}

Now, we define the weight $\o:\O\to \R_+$ by 
\begin{equation}\label{TreeWeight}
\o(x):=\left\{
  \begin{array}{l l}
     \dfrac{|B_t|}{|W_t|} & \quad \text{if $x\in B_t$ for some $t\in\Gamma$, $t\neq a$}\\
     \\
     1 & \quad \text{otherwise}.\\
   \end{array} \right.
\end{equation}

Let us observe that the collection $\{B_t\}_{t\in\Gamma}$ is pairwise disjoint, thus the weight is well defined. Moreover, $0<\omega(x)\leq 1$ for all $x\in\Omega$.

\begin{theorem}[Decomposition technique]\label{tree} Let $\O\subset\R^n$ be a bounded domain for which there exists a decomposition $\{\O_t\}_{t\in\Gamma}$ that fulfills  {\bf (a)} and {\bf (b)}. Given $f\in L^1(\O)$, and $1<p<\infty$, the decomposition $f=\sum_{t\in\Gamma}f_t-\tilde{f}_t$ defined on (\ref{tree decomposition}) satisfies that $Supp(f_t-\tilde{f}_t)\subset \O_t$, $\int_{\Omega_t} f_t-\tilde{f}_t=0$ for all $t\neq a$,  $\int_{\Omega_a} f_a-\tilde{f}_a=\int_\O f$, and
\begin{equation}\label{normtree}
\sum_{t\in\Gamma}\|f_t-\tilde{f}_t\|^p_{L^p(\O_t,\o)} \leq C_1 \|f\|^p_{L^p(\O)},
\end{equation}
where $C_1=2^pN\left(1+\dfrac{2^{p+1}p}{p-1}\right)$.

In addition, if $\ho_1,\ho_2:\O\to\R_{>0}$ are two weights satisfying that $L^p(\O,\ho_2)\subset L^1(\O)$, and  the identity operator $I$ and $T$ are continuous from $L^p(\O,\ho_2)$ to $L^p(\O,\ho_1)$ with norms $M_I$ and $M_T$, 
the mentioned decomposition also satisfies the following estimate
\begin{equation}\label{normtree2}
\sum_{t\in\Gamma}\|f_t-\tilde{f}_t\|^p_{L^p(\O_i,\o\ho_1)} \leq C_2 \|f\|^p_{L^p(\O,\ho_2)},
\end{equation}
where $C_2=2^p(NM^p_I+2M^p_T)$.
\end{theorem}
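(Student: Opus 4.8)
The plan is to establish first the three algebraic assertions (support, vanishing means, and the reconstruction $f=\sum_t(f_t-\tilde f_t)$) and then the two norm bounds, treating (\ref{normtree}) as the unweighted instance of (\ref{normtree2}). For the support, $f_t=f\phi_t$ lives in $\O_t$, while in (\ref{tree decomposition}) the function $\chi_t$ is supported on $B_t\subset\O_t$ and each $\chi_s$ with $s_p=t$ on $B_s\subset\O_s\cap\O_t\subset\O_t$, so $\tilde f_t$ is supported in $\O_t$ as well. Writing $g_t:=\sum_{k\succeq t}f_k$ and integrating (\ref{tree decomposition}) with $\int\chi_t=|B_t|$ gives $\int\tilde f_t=\int g_t-\sum_{s:s_p=t}\int g_s$; since the tree makes $\{k\succeq t:\,k\ne t\}$ the disjoint union over the children $s$ of $\{k\succeq s\}$, the subtracted term equals $\int g_t-\int f_t$, whence $\int\tilde f_t=\int f_t$ and $\int(f_t-\tilde f_t)=0$ for $t\ne a$, while at the root the absent leading term leaves $\int(f_a-\tilde f_a)=\int f$. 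The same counting gives $\sum_t\tilde f_t=0$: setting $a_t:=\frac{\chi_t}{|B_t|}\int_{W_t}g_t$ we have $\tilde f_t=a_t-\sum_{s:s_p=t}a_s$, and each $s\ne a$ occurs once as a child, so the sum telescopes to zero and $f=\sum_t f_t=\sum_t(f_t-\tilde f_t)$.

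For (\ref{normtree}) I would split $\sum_t\|f_t-\tilde f_t\|_{L^p(\O_t,\o)}^p$ using $|u-v|^p\le 2^p(|u|^p+|v|^p)$ into an $f_t$-sum and a $\tilde f_t$-sum. Because $\o\le 1$ and $\sum_t\phi_t^p\le\sum_t\chi_{\O_t}\le N$ by property (a), the first is at most $N\|f\|_{L^p(\O)}^p$, which is the inequality recorded in Subsection~\ref{tree subsection}.

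The heart of the proof is the $\tilde f_t$-sum. By property (b) the two blocks of (\ref{tree decomposition}) are supported on the pairwise disjoint sets $B_t$ and $\{B_s\}_{s_p=t}$, so the $L^p(\O_t,\o)$-norm of $\tilde f_t$ is the sum of the norms of the blocks; and since $\o=|B_t|/|W_t|$ on $B_t$, each block equals $A_t:=\frac{|B_t|}{|W_t|^p}\big|\int_{W_t}g_t\big|^p$. The key estimate is $|g_t|\le|f|$ pointwise — as $g_t=f\sum_{k\succeq t}\phi_k$ with $0\le\sum_{k\succeq t}\phi_k\le 1$ — giving $A_t\le\int_{B_t}\big(\tfrac{1}{|W_t|}\int_{W_t}|f|\big)^p$, so $\sum_{t\ne a}A_t\le\|Tf\|_{L^p(\O)}^p$ by the definition (\ref{Ttree}) of $T$. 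Each $A_t$ then appears exactly twice in $\sum_t\|\tilde f_t\|^p$, once as the leading block of $\tilde f_t$ and once as a child block of $\tilde f_{t_p}$, so $\sum_t\|\tilde f_t\|_{L^p(\O_t,\o)}^p=2\sum_{t\ne a}A_t\le 2\|Tf\|_{L^p(\O)}^p$, which Lemma \ref{Ttreecont} bounds by $2\cdot 2^p\frac{pN}{p-1}\|f\|_{L^p(\O)}^p$. Adding the two sums produces $C_1=2^pN\big(1+\tfrac{2^{p+1}p}{p-1}\big)$.

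The weighted bound (\ref{normtree2}) follows the same template with the weight $\ho_1$ inside the target norms and $\ho_2$ inside the source. The $f_t$-sum is $\le N\|f\|_{L^p(\O,\ho_1)}^p$ exactly as before, which the continuity of the identity $I:L^p(\O,\ho_2)\to L^p(\O,\ho_1)$ converts to $NM_I^p\|f\|_{L^p(\O,\ho_2)}^p$; and the identical disjointness-plus-double-counting computation bounds the $\tilde f_t$-sum by $2\|Tf\|_{L^p(\O,\ho_1)}^p\le 2M_T^p\|f\|_{L^p(\O,\ho_2)}^p$ using the continuity of $T$. This gives $C_2=2^p(NM_I^p+2M_T^p)$, and (\ref{normtree}) is recovered by $\ho_1\equiv\ho_2\equiv 1$, where $M_I=1$ and $M_T^p=\|T\|_{L^p\to L^p}^p\le 2^p\frac{pN}{p-1}$. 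The only genuine obstacle is this $\tilde f_t$-sum: one must exploit the disjointness of the $B_t$ to separate the two blocks cleanly, recognize the resulting averages as the operator $T$ so that Lemma \ref{Ttreecont} applies, and account for the factor $2$ coming from each block's double appearance; the hypothesis $f\in L^1(\O)$ (resp. $L^p(\O,\ho_2)\subset L^1(\O)$) is exactly what makes every integral $\int_{W_t}g_t$ finite and the rearrangements legitimate.
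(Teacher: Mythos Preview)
Your proof is correct and follows essentially the same approach as the paper's: split via $|u-v|^p\le 2^p(|u|^p+|v|^p)$, bound the $f_t$-sum by $N\|f\|^p$ using $\o,\phi_t\le 1$ and the overlap, and for the $\tilde f_t$-sum exploit the pairwise disjointness of the $B_t$ to reduce each block to an average over $W_t$, recognize the result as $Tf$, and invoke Lemma~\ref{Ttreecont}. The only cosmetic differences are that the paper proves the weighted estimate~(\ref{normtree2}) first and deduces~(\ref{normtree}) from it, and verifies $\sum_t\tilde f_t=0$ by the pointwise cancellation $\tilde f_s(x)+\tilde f_{s_p}(x)=0$ on $B_s$ rather than your telescoping of the $a_t$'s.
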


\begin{proof} Observe that $B_t$ and all the $B_s$ on the identity (\ref{tree decomposition}) are included in $\O_t$, thus it follows that  $Supp(f_t-\tilde{f}_t)\subset \O_t$.  

Let us remark that $\tilde{f}_t(x)\neq 0$ only if $x$ belongs to $B_s$ for some $s\in\Gamma-\{a\}$ with $t=s$ or $t=s_p$. Moreover, given $x$ in $B_s$ it follows that $\tilde{f}_s(x)+\tilde{f}_{s_p}(x)=0$, concluding that 
\[\sum_{t\in \Gamma}f_t(x)-\tilde{f}_t(x)=\sum_{t\in \Gamma}f_t(x)-\sum_{t\in \Gamma}\tilde{f}_t(x)=f(x)+0.\]

On the other hand, in order to prove the vanishing mean value of $f_t-\tilde{f}_t$, with $t\neq a$,  
\begin{eqnarray*}
\int_{\O_t}\tilde{f}_t=\int_{W_t}\sum_{k\succeq t}f_k-\sum_{s:\,s_p=t}\int_{W_{s}}\sum_{k\succeq s}f_k=\sum_{k\succeq t}\int_{\O}f_k-\sum_{\substack{k\succeq t\\ k\neq t}}\int_{\O}f_k=
\int_{\O_t}f_t,\end{eqnarray*}
obtaining that $\int_{\O_t}f_t-\tilde{f}_t=0$.  The case $t=a$ follows from
\begin{eqnarray*}
\int_{\O_a}f_a-\tilde{f}_a=\int_{\O_a}f_a+\sum_{s:\,s_p=a}\int_{W_{s}}\sum_{k\succeq s}f_k=\int_{\O}f_a+\sum_{\substack{k\neq a}}\int_{\O}f_k=
\int_{\O}f.\end{eqnarray*}

Let us continue with the proof of (\ref{normtree2}). Thus,

\begin{eqnarray*}
\sum_{t\in\Gamma}\|f_t-\tilde{f}_t\|^p_{L^p(\O_t,\o\ho_1)} &\leq& 2^p \sum_{t\in\Gamma}\|f_t\|^p_{L^p(\O_t,\o\ho_1)}+2^p\sum_{t\in\Gamma}\|\tilde{f}_t\|^p_{L^p(\O_t,\o\ho_1)}=(I')+(II').
\end{eqnarray*}

Using that $0\leq \phi_i,\,\o\leq 1$ and the overlapping of the collection $\O_t$ is not greater than $N$, it follows that 
\[(I')\leq 2^p N\|f\|^p_{L^p(\O,\ho_1)}\leq 2^p N M^p_{I} \|f\|^p_{L^p(\O,\ho_2)}.\]

On the other hand, using that the collection $\{B_t\}_{t\neq a}$ is pairwise disjoint, it can be observed for any $t\neq a$ that

\begin{eqnarray*}
\left(|\tilde{f}_t(x)|\o(x)\ho_1(x)\right)^p&\leq&\left(\dfrac{\o(x)\chi_t(x)}{|B_t|}\int_{W_t}|f|\ +\ \sum_{s:\,s_p=t}\dfrac{\o(x)\chi_s(x)}{|B_s|}\int_{W_s}|f|\right)^p\ho_1(x)^p\\
&=&\left(\dfrac{\chi_t(x)}{|W_t|}\int_{W_t}|f|\ +\ \sum_{s:\,s_p=t}\dfrac{\chi_s(x)}{|W_s|}\int_{W_s}|f|\right)^p\ho_1(x)^p\\
&=&\left(\left(\dfrac{\chi_t(x)}{|W_t|}\int_{W_t}|f|\right)^p\ +\ \sum_{s:\,s_p=t}\left(\dfrac{\chi_s(x)}{|W_s|}\int_{W_s}|f|\right)^p\right)\ho_1(x)^p.
\end{eqnarray*}

The case $t=a$ is analogous. Hence, 
\begin{eqnarray*}
\sum_{t\in\Gamma}\int_{\O_t}\left(|\tilde{f}_t(x)|\o(x)\ho_1(x)\right)^p &\leq& 2\int_\O \sum_{a\neq t\in\Gamma}\left(\dfrac{\chi_t(x)}{|W_t|}\int_{W_t}|f|\right)^p\ho_1(x)^p\\
&=& 2\int_\O \left(\sum_{a\neq t\in\Gamma}\dfrac{\chi_t(x)}{|W_t|}\int_{W_t}|f|\right)^p\ho_1(x)^p.
\end{eqnarray*}

Finally,
\begin{eqnarray*}
(II')&\leq& 2^{p+1} \int_\O (Tf(x)\ho_1(x))^p \leq 2^{p+1}M^p_T\|f\|^p_{L^p(\O,\ho_2)},
\end{eqnarray*}
ending the proof of (\ref{normtree2}). 

Using the continuity of $T$ proved in Lemma \ref{Ttreecont}, it can be seen that (\ref{normtree}) follows from (\ref{normtree2}) 
\end{proof}

\subsection{An application: Divergence problem}

In this subsection, we apply Theorem \ref{tree} to show the existence of a weighted solution of the divergence problem on some bounded domains $\O\subset\R^n$. In fact, this result can be applied if there exists a collection $\{\O_t\}_{t\in\Gamma}$ of subdomains of $\O$ verifying {\bf (a)} and {\bf (b)} in Subsection \ref{def tree}, and the additional four conditions stated bellow:
\begin{enumerate}
\item[{\bf (c)}] \label{c}For any point $x\in \Omega$ there exists an open set $U$ containing $x$ such that $U\cap \Omega_t\neq \emptyset$ for a finite number of $\O_t$'s (this finite number does not need to be bounded by $N$). 

\item[{\bf (d)}] \label{d}There exists a weight $\bo:\O\to\R_{>0}$ such that
\begin{equation*}
\esssup_{x\in \O_t}\bo(x)\leq M_1 \essinf_{x\in \O_t} \o(x),
\end{equation*}  
for all $t\in\Gamma$, where $\omega$ is the weight defined in (\ref{TreeWeight}) and $M_1$ is independent of $t$.
\end{enumerate}

In the next two conditions $\ho:\O\to \R_{>0}$ is a weight such that $L^p(\O,\ho)\subset L^1(\O)$, with $1<p<\infty$.
\begin{enumerate}
\item[{\bf (e)}] \label{e} Given $g\in L^p(\O_t,\ho)$, with vanishing mean value, there exists a solution $\vv\in W_0^p(\O_t,\ho)^n$ of $\di\vv=g$ with   
\begin{eqnarray*}
\|D\vv\|_{L^p(\O_t,\ho)}\leq M_2 \|g\|_{L^p(\O_t,\ho)},
\end{eqnarray*}
for all $t\in\Gamma$, where the positive constant $M_2$ does not depend on $t$. 
\item[{\bf (f)}] \label{f} The operator $T$ defined in (\ref{Ttree}) is continuous from $L^p(\Omega,\ho)$ to itself with norm $M_T$.
\end{enumerate}

An example of a collection of subdomains verifying {\bf (e)} could be $\{\O_t\}_{t\in\Gamma}$ such that the constant $C_{\O_t}$ is uniformly bounded (for example, cubes) and the weight $\ho$ satisfies that 
\begin{equation*}
\esssup_{x\in \O_t}\ho(x)\leq C \essinf_{x\in \O_t} \ho(x),
\end{equation*}  
where $C$ is independent of $t$. 

The condition {\bf (f)} is used to include the weight $\ho$ in both sides of the inequality (\ref{Div estimate}). The case when $\ho=1$ was proved in general in Lemma \ref{Ttreecont}.

\begin{theorem}\label{Div} Let $\Omega\subset\R^n$ be a bounded domain, $\ho,\bo:\O\to \R_{>0}$ two weights, with $L^p(\O,\ho)\subset L^1(\O)$, for $1<p<\infty$, and finally a collection $\{\O_t\}_{t\in\Gamma}$ of subdomains of $\O$ verifying the conditions from {\bf (a)} to {\bf (f)} mentioned above. Hence, given $f\in L^p_0(\O,\ho)$ with vanishing mean value there exists a solution $\uu\in W_0^{1,p}(\O,\bo\ho)^n$ of $\di\uu=f$ such that
\begin{eqnarray}\label{Div estimate}
\|D\uu\|_{L^p(\Omega,\bo\ho)}\leq C \|f\|_{L^p(\O,\ho)},
\end{eqnarray}
where
 \[C= 2NM_1M_2\left(N+2M_T^p\right)^{1/p}.\]
\end{theorem}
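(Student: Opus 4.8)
The plan is to combine the decomposition from Theorem~\ref{tree} with the local solvability hypothesis \textbf{(e)} and the continuity of $T$ from \textbf{(f)} to glue together local solutions of the divergence equation into a global one. The starting point is to apply Theorem~\ref{tree} to the given $f\in L^p_0(\O,\ho)$, obtaining $f=\sum_{t\in\Gamma}(f_t-\tilde f_t)$ with $\mathrm{Supp}(f_t-\tilde f_t)\subset\O_t$ and $\int_{\O_t}(f_t-\tilde f_t)=0$ for every $t\neq a$; the vanishing mean value of $f$ together with $\int_{\O_a}(f_a-\tilde f_a)=\int_\O f=0$ ensures the root piece also integrates to zero. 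Since each local piece has zero mean on $\O_t$ and lies in $L^p(\O_t,\ho)$, hypothesis \textbf{(e)} produces local solutions $\uu_t\in W_0^{1,p}(\O_t,\ho)^n$ of $\di\uu_t=f_t-\tilde f_t$ satisfying $\|D\uu_t\|_{L^p(\O_t,\ho)}\le M_2\|f_t-\tilde f_t\|_{L^p(\O_t,\ho)}$. Extending each $\uu_t$ by zero outside $\O_t$ (legitimate because of the zero trace) and setting $\uu:=\sum_{t\in\Gamma}\uu_t$, condition \textbf{(c)} guarantees this sum is locally finite, so $\uu$ is well defined and $\di\uu=\sum_t(f_t-\tilde f_t)=f$.

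The heart of the estimate is to bound $\|D\uu\|_{L^p(\O,\bo\ho)}$ by $\|f\|_{L^p(\O,\ho)}$. First I would pass from the weight $\bo\ho$ to $\o\ho$ on each subdomain using hypothesis \textbf{(d)}: since $\esssup_{\O_t}\bo\le M_1\essinf_{\O_t}\o$, on the support $\O_t$ of $D\uu_t$ we may replace $\bo$ by $M_1\o$, paying a factor $M_1$. Because the overlap of the $\{\O_t\}$ is at most $N$ by \textbf{(a)}, the $p$-th power of the global norm is controlled by $N$ times the sum $\sum_{t\in\Gamma}\|D\uu_t\|^p_{L^p(\O_t,\o\ho)}$, and here one must be careful that the weight $\o$ appearing in the sum is compatible with the local estimate. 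Applying \textbf{(e)} locally and then \textbf{(d)} again to bring the constant weight of the local solve into line yields $\sum_t\|D\uu_t\|^p_{L^p(\O_t,\o\ho)}\le M_1^pM_2^p\sum_t\|f_t-\tilde f_t\|^p_{L^p(\O_t,\o\ho)}$. At this point the second estimate of Theorem~\ref{tree}, namely (\ref{normtree2}) with the choice $\ho_1=\ho_2=\ho$, bounds $\sum_t\|f_t-\tilde f_t\|^p_{L^p(\O_t,\o\ho)}$ by $C_2\|f\|^p_{L^p(\O,\ho)}$ with $C_2=2^p(NM_I^p+2M_T^p)$; here $M_I=1$ since the identity on $L^p(\O,\ho)$ has norm one, and $M_T$ is furnished precisely by \textbf{(f)}. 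Assembling the constants $N^{1/p}$, $M_1$, $M_2$, and $(2^p(N+2M_T^p))^{1/p}=2(N+2M_T^p)^{1/p}$ reproduces the stated $C=2NM_1M_2(N+2M_T^p)^{1/p}$, with the remaining factor of $N$ coming from the overlap in the global reassembly combined with the $N$ inside $C_2$.

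The step I expect to be the main obstacle is the careful bookkeeping of the weights and the overlap constant when transferring from the \emph{local} estimate in \textbf{(e)} (which is stated with weight $\ho$ alone) to the \emph{global} estimate (\ref{Div estimate}) with weight $\bo\ho$, while simultaneously inserting the auxiliary weight $\o$ from the decomposition theorem so that (\ref{normtree2}) becomes applicable. Concretely, one must verify that replacing the essentially constant weight $\o$ on each $\O_t$ by its infimum (via \textbf{(d)}) is compatible in the right direction on both the $D\uu_t$ side and the $f_t-\tilde f_t$ side, so that the single weight $\o\ho$ threads consistently through the local solve and through Theorem~\ref{tree}; an error in the direction of these inequalities or in double-counting the overlap factor $N$ would corrupt the final constant. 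A secondary technical point is confirming that $\uu\in W_0^{1,p}(\O,\bo\ho)^n$ — that the locally finite sum of zero-extended $W_0^{1,p}(\O_t)$ functions genuinely lands in the global trace-zero weighted space — which follows from \textbf{(c)} together with the finiteness of the norm just established, but deserves an explicit word.
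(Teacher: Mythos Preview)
Your overall strategy is exactly the paper's: decompose $f$ via Theorem~\ref{tree}, solve locally using \textbf{(e)}, sum using \textbf{(c)}, and control the norm via \textbf{(d)} and (\ref{normtree2}) with $\ho_1=\ho_2=\ho$. The one genuine slip is in the step you yourself flag as the main obstacle. You write
\[
\sum_t\|D\uu_t\|^p_{L^p(\O_t,\o\ho)}\le M_1^pM_2^p\sum_t\|f_t-\tilde f_t\|^p_{L^p(\O_t,\o\ho)},
\]
invoking ``\textbf{(d)} again''. But \textbf{(d)} compares $\bo$ with $\o$, not $\o$ with itself; it gives no bound on $\esssup_{\O_t}\o/\essinf_{\O_t}\o$, and indeed $\o$ (which equals $1$ off the $B_s$'s and $|B_s|/|W_s|$ on each $B_s$) is generally far from constant on $\O_t$. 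So once you have passed from $\bo$ to the \emph{function} $\o$, you cannot cleanly apply the local estimate \textbf{(e)}, which is stated with weight $\ho$ alone.

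The fix, and this is precisely what the paper does, is to use \textbf{(d)} in the sharper form $\bo\le M_1\,\essinf_{\O_t}\o$ on $\O_t$ and pull out the \emph{constant} $\essinf_{\O_t}\o$ rather than the function $\o$:
\[
\|D\uu_t\|_{L^p(\O_t,\bo\ho)}\le M_1\bigl(\essinf_{\O_t}\o\bigr)\|D\uu_t\|_{L^p(\O_t,\ho)}
\le M_1M_2\bigl(\essinf_{\O_t}\o\bigr)\|f_t-\tilde f_t\|_{L^p(\O_t,\ho)}
\le M_1M_2\|f_t-\tilde f_t\|_{L^p(\O_t,\o\ho)}.
\]
Now \textbf{(d)} is used only once, the variable weight $\o$ never touches the local solve, and summing with the overlap bound $\|D\uu\|^p_{L^p(\O,\bo\ho)}\le N^p\sum_t\|D\uu_t\|^p_{L^p(\O_t,\bo\ho)}$ (the paper uses the crude factor $N^p$ rather than your $N$) together with (\ref{normtree2}) gives exactly $C=2NM_1M_2(N+2M_T^p)^{1/p}$. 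Your constant bookkeeping in the final paragraph is accordingly off: there is only one factor of $M_1$, and the full $N$ comes from the overlap step, not from combining $N^{1/p}$ with the $N$ inside $C_2$.
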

\begin{proof} The collection of subdomains satisfies {\bf (a)} and {\bf (b)}, and from {\bf (f)} the weight $\ho$ makes the operator $T:L^p(\O,\ho)\to L^p(\O,\ho)$ continuous. Thus, using Theorem \ref{tree} we can decompose 
the integrable function $f$ as \[f=\sum_{t\in\Gamma}f_t-\tilde{f}_t,\]
where $f_t-\tilde{f}_t\in L^p(\O_t,\ho)$, with vanishing mean value, and 
\begin{equation*}
\sum_{t\in\Gamma}\left(\essinf_{x\in \O_t} \o(x)\right)^p  \|f_t-\tilde{f}_t\|^p_{L^p(\O_t,\ho)} \leq  \sum_{t\in\Gamma}\|f_t-\tilde{f}_t\|^p_{L^p(\O_t,\o\ho)} \leq C_2\|f\|^p_{L^p(\O,\ho)},
\end{equation*}
where $C_2=2^p\left(N+2M_T^p\right)$.

Note that the essential infimum of $\o$ over $\O_t$ is positive because of {\bf (d)}, then $f_t-\tilde{f}_t$ belongs to $L^p(\O_t,\ho)$ as we announced. Now, using condition {\bf (e)} there exists a solution $\uu_t\in W_0^p(\O_t,\ho)^n$ of $\di\uu_t=f_t-\tilde{f}_t$, with \[\|D\uu_t\|_{L^p(\O_t,\ho)}\leq M_2 \|f_t-\tilde{f}_t\|_{L^p(\O_t,\ho)},\]
where $M_2$ is independent of $t$.
Therefore, using requirement {\bf (c)}, the vector field $\uu:=\sum_{t\in\Gamma}\uu_t$ is a solution of $\di\uu=f$. Moreover, using {\bf (d)}
\begin{eqnarray*}
\|D\uu\|^p_{L^p(\O,\bo\ho)}&\leq& N^p \sum_{t\in\Gamma}\|D\uu_t\|^p_{L^p(\O_t,\bo\ho)}\leq N^p M_1^p \sum_{t\in\Gamma}\left(\essinf_{x\in \O_t}\o(x)\right)^p\|D\uu_t\|^p_{L^p(\O_t,\ho)}\\
&\leq& N^p M_1^p M_2^p\sum_{t\in\Gamma}\left(\essinf_{x\in \O_t}\o(x)\right)^p\|f_t-\tilde{f}_t\|^p_{L^p(\O_t,\ho)}\\
&\leq& N^p M_1^pM_2^p 2^p\left(N+2M_T^p\right)\|f\|^p_{L^p(\O,\ho)},
\end{eqnarray*}
proving that $\uu$ belongs to $W^{1,p}(\Omega,\bo\ho)^n$ and the estimate claimed in the theorem.

Finally, let us prove that $\uu$ belongs to $\overline{C_0^\infty(\O)^n}$. Given $\epsilon>0$, using that $\sum_{t\in\Gamma}\|D\uu_t\|^p_{L^p(\O_t,\bo\ho)}<\infty$, there exists a finite set $\Gamma_0\subset\Gamma$ such that 
\[\sum_{t\in\Gamma\setminus\Gamma_0}\|D\uu_t\|^p_{L^p(\O_t,\bo\ho)}<N^{-p}\frac{\epsilon}{2}.\]
Now, for $t\in\Gamma_0$, we take $\vv_t\in C_0^\infty(\Omega_t)^n$ such that
\[\|D\uu_t-D\vv_t\|^p_{L^p(\O_t,\ho)}\leq N^{-p} M_1^{-p} \left(\essinf_{x\in \O_t}\o(x)\right)^{-p} \frac{\epsilon}{2m},\]
where $m$ is the cardinal of $\Gamma_0$.
Thus, using that each $\O_t$ is included in $\O$ and $\Gamma_0$ is finite, $\vv:=\sum_{t\in\Gamma_0}\vv_t$ belongs to $C_0^\infty(\O)^n$ and 
\begin{eqnarray*}
& &\|D\uu-D\vv\|^p_{L^p(\O,\bo\ho)}\\
&\leq&N^p \sum_{t\in\Gamma\setminus\Gamma_0}\|D\uu_t\|^p_{L^p(\O_t,\bo\ho)}   +   N^p M_1^p \sum_{t\in\Gamma_0}\left(\essinf_{x\in \O_t}\o(x)\right)^p\|D\uu_t-D\vv_t\|^p_{L^p(\O_t,\ho)}< \epsilon,
\end{eqnarray*}
completing the proof.
\end{proof}

In the next corollary we prove that $\O$ satisfies $\divp$, with an estimate over the constant $C_\O$, if it is possible to decompose $\O$ by a good enough collection of subdomains $\{\O_t\}_{t\in\Gamma}$.

\begin{corollary}\label{Rooms and Corridors} Let $\O\subset\R^n$ be a bounded domain for which there exists a decomposition $\{\O_t\}_{t\in\Gamma}$ that fulfills  {\bf (a)}, {\bf (b)}, {\bf (c)} and {\bf (e)} for $\ho=1$ and $1<p<\infty$ such that $\omega(x)\geq \frac{1}{M_1}$ for all $x \in \Omega$. Hence,  
$\O$ satisfies $\divp$ and its constant $C_\O$ is bounded by \[C_\O\leq 2M_1M_2N^{1+1/p}\left(1+\frac{2^{p+1}p}{p-1}\right)^{1/p}.\]
\end{corollary}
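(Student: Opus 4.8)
The plan is to deduce the corollary as a direct specialization of Theorem \ref{Div} to the case $\ho\equiv 1$ and $\bo\equiv 1$. The corollary already supplies conditions {\bf (a)}, {\bf (b)}, {\bf (c)} and {\bf (e)} (the latter with $\ho=1$), and since $\O$ is bounded we have $L^p(\O)\subset L^1(\O)$, so the inclusion hypothesis of Theorem \ref{Div} holds for $\ho=1$. Thus the only remaining work is to produce the two conditions {\bf (d)} and {\bf (f)} and then to track the constant.

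First I would verify {\bf (d)} with the choice $\bo\equiv 1$. The assumption $\o(x)\geq 1/M_1$ on $\O$ gives $\essinf_{x\in\O_t}\o(x)\geq 1/M_1$ for every $t\in\Gamma$, whence
\[\esssup_{x\in\O_t}\bo(x)=1\leq M_1\essinf_{x\in\O_t}\o(x),\]
which is exactly {\bf (d)} with the same constant $M_1$. Next I would read off {\bf (f)} from Lemma \ref{Ttreecont}: since $\ho=1$, the operator $T$ is continuous from $L^p(\O)$ to itself with norm $M_T\leq 2\left(\frac{pN}{p-1}\right)^{1/p}$, so that $M_T^p\leq \frac{2^p pN}{p-1}$.

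With all of {\bf (a)}--{\bf (f)} in force, Theorem \ref{Div} yields $\uu\in W_0^{1,p}(\O,\bo\ho)^n=W_0^{1,p}(\O)^n$ solving $\di\uu=f$, and since $\bo\ho=1$ the estimate (\ref{Div estimate}) becomes $\|D\uu\|_{L^p(\O)}\leq C\|f\|_{L^p(\O)}$ with $C=2NM_1M_2\left(N+2M_T^p\right)^{1/p}$; this is precisely the assertion that $\O$ satisfies $\divp$ with $C_\O\leq C$. It then remains to simplify $C$: substituting the bound on $M_T^p$ and factoring $N$ out of the parenthesis,
\[C\leq 2NM_1M_2\left(N+\frac{2^{p+1}p}{p-1}N\right)^{1/p}=2M_1M_2\,N^{1+1/p}\left(1+\frac{2^{p+1}p}{p-1}\right)^{1/p},\]
which is the claimed bound. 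I do not expect a genuine obstacle here; the only point worth emphasizing is that the hypothesis $\o\geq 1/M_1$ is exactly what makes {\bf (d)} run with the trivial weight $\bo\equiv 1$, so that the weighted estimate of Theorem \ref{Div} collapses to the unweighted $\divp$ estimate.
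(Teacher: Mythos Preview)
Your proof is correct and follows exactly the approach of the paper, which simply says the result is a consequence of Theorem \ref{Div} with $\ho=\bo=1$ and Lemma \ref{Ttreecont}. You have merely spelled out the verification of {\bf (d)} and {\bf (f)} and the arithmetic of the constant that the paper leaves implicit.
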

\begin{proof}
This result is a consequence of the previous theorem using $\ho=\bo=1$ and Lemma \ref{Ttreecont}. 
\end{proof}

\section{Divergence problem on general domains}\label{General domains}

In this section, we show the existence of a solution in a weighted Sobolev space of the divergence problem, $\di\uu=f$, on an arbitrary bounded domain $\O\subset\R^n$. The constant involved in the estimation of the solution is explicit and depends only on $n$ and $p$. Furthermore, we use Whitney cubes to decompose the domain $\O$, and the weight $\o$ that we obtain for this decomposition depends locally on the ratio $\frac{|Q|}{|S(Q)|}$, where $Q$ is a Whitney cube and $S(Q)$ is its shadow. Thus, it could be of interest to consider domains where this ratio is studied. 

In \cite{DMRT}, the authors prove a similar result also for arbitrary bounded domains using an atomic decomposition obtained from a weighted Poincar\'e inequality, where the weight is related to the Euclidean geodesic distance in $\O$.

Let $\mathcal{W}:=\{Q_t\}_{t\in\Gamma}$ be a Whitney decomposition, i.e. a family of closed dyadic cubes whose interiors are pairwise disjoints, which satisfies 
\begin{enumerate}
\item[(i)] $\Omega=\bigcup_{t\in\Gamma}Q_t$, 
\item[(ii)] $diam(Q_t)\leq dist(Q_t,\partial\Omega)\leq 4\,diam(Q_t)$,
\item[(iii)] $\frac14\,diam(Q_s)\leq diam (Q_t) \leq 4\,diam(Q_s)$, when $Q_s\cap Q_t\neq \emptyset$,
\end{enumerate}
where $diam(Q)$ denotes the diameter of $Q$. Moreover, given a constant $\varepsilon\in (0,1/4)$ which is arbitrary but will be kept fixed in what follows, $Q_t^*$ denotes the open cube which has the same center as $Q_t$ but is expanded by the factor $1+\varepsilon$. This collection of expanded cubes satisfies that  
\begin{eqnarray}\label{Whitney overlapping}
\sum_{t\in\Gamma}\chi_{Q^*_t}(x)\leq 12^n \chi_{\Omega}, \text{ for all } x\in\R^n.
\end{eqnarray}
Moreover, $Q^*_s$ intersects $Q_t$ if and only if $Q_s$ touches $Q_t$. See \cite{St} for details. 

Now, let us take a Whitney cube $Q_a$ which will be distinguished from the rest. Then, for each $Q_t$ in $\mathcal{W}$ we take a unique chain of cubes $\Omega_a=\Omega_{t_0},\Omega_{t_1},\cdots,\Omega_{t_k}=\Omega_t$ connecting $Q_a$ with $Q_t$, such that for each $1\leq i\leq k$ the intersection between $Q_{t_i}$ and $Q_{t_{i-1}}$ is a $n-1$ dimensional face of one of those cubes. In addition, we assume that $k$ is minimal over this type of chains and, using an inductive argument, that $\Omega_a,\Omega_{t_1},\cdots,\Omega_{t_i}$ is the chain taken for each $\Omega_{t_i}$, with $1\leq i\leq k$.

Observe that using these chains connecting any $Q_t\in \mathcal{W}$ with $Q_a$ in a unique way it is possible to define a rooted tree structure over $\Gamma$. Indeed, we say that two vertices $s,s'\in \Gamma$ are connected by an edge if and only if $Q_s$ and $Q_{s'}$ are consecutive cubes in a chain $Q_a,Q_{t_1},\cdots,Q_{t_k}=Q_t$, for some $Q_t$. As it is expected, the root of $\Gamma$ is the index $a$ of the distinguished cube $Q_a$. In addition, a partial order $\preceq$ over $\Gamma$ is inherited.

Now, we define the shadow $S(Q_t)$ of a cube $Q_t\in \mathcal{W}$ as 
\begin{equation}\label{shadow}
S(Q_t):=\bigcup_{s\succeq t} Q^*_s.
\end{equation}

\begin{theorem}\label{Divergence General domain} Let $\O\subset\R^n$ be an arbitrary bounded domain, and $1< p<\infty$.  Given $f\in L^p_0(\Omega)$ there exists a vector field $\uu\in W^{1,p}_0(\Omega,\bo)^n$  solution of $\di\uu=f$ with the estimate 
\begin{eqnarray}\label{Div general domain}
\|D\uu\|_{L^p(\Omega,\bo)}\leq C_{p,n} \|f\|_{L^p(\O)},
\end{eqnarray}
where $C_{p,n}$ depends only on $n$ and $p$, and $\bo$ is defined over the interior of $Q_s$ as 
\begin{eqnarray*}
\bo:=\min_{Q_k\cap Q_s\neq \emptyset}\dfrac{|Q_k|}{|S(Q_k)|}.
\end{eqnarray*}
The expanded cubes $Q^*_s$ use in their definition $\varepsilon:=2^{-7}$.
\end{theorem}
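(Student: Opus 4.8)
The plan is to apply Theorem \ref{Div} to the collection $\{\O_t\}_{t\in\Gamma}$ of expanded Whitney cubes, $\O_t:=Q_t^*$, taking $\ho\equiv 1$; then the weight produced on the left-hand side of (\ref{Div estimate}) is exactly $\bo\cdot 1=\bo$, the right-hand side is the unweighted $\|f\|_{L^p(\O)}$, and the requirement $L^p(\O,\ho)\subset L^1(\O)$ holds trivially since $\O$ is bounded. The first observation is that, by (\ref{shadow}), $W_t=\bigcup_{s\succeq t}Q_s^*=S(Q_t)$, so $|W_t|=|S(Q_t)|$ and the abstract weight of (\ref{TreeWeight}) equals $|B_t|/|S(Q_t)|$ on $B_t$. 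Condition {\bf (a)} is immediate from (\ref{Whitney overlapping}): the lower bound holds because $\{Q_t\}$ covers $\O$ and $Q_t\subset Q_t^*$, and the upper bound is (\ref{Whitney overlapping}) with $N=12^n$. Condition {\bf (c)} is just the local finiteness of a Whitney family, since any point of $\O$ has a neighborhood meeting only finitely many $Q_t^*$.

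The technical heart is condition {\bf (b)} together with the weight comparison {\bf (d)}. For $t\neq a$ the cubes $Q_t$ and $Q_{t_p}$ meet along a common $(n-1)$-face $F_t$, which is a full face of the smaller of the two, and by Whitney property (iii) the two side lengths are comparable. I would define $B_t$ to be the open slab straddling $F_t$, with base $F_t$ (slightly shrunk away from the edges) and half-thickness a fixed multiple of $\varepsilon\,\mathrm{diam}(Q_t)$ on each side. With the fixed value $\varepsilon=2^{-7}$ one has on the one hand $B_t\subset Q_t^*\cap Q_{t_p}^*$, since both expanded cubes contain a two-sided neighborhood of $F_t$ of half-thickness $\tfrac{\varepsilon}{2}\min(\mathrm{diam}(Q_t),\mathrm{diam}(Q_{t_p}))$, and on the other hand $|B_t|\ge c_n|Q_t|$; the smallness of $\varepsilon$ is exactly what is needed so that distinct slabs, which sit around distinct faces of the Whitney cubes, remain pairwise disjoint. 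This gives {\bf (b)} and the two-sided bound $c_n|Q_t|\le|B_t|\le|Q_t|$ up to dimensional constants, hence $|B_t|/|S(Q_t)|\simeq|Q_t|/|S(Q_t)|$.

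For {\bf (d)} I would evaluate both sides on a fixed $\O_t=Q_t^*$. On $Q_t^*$ the abstract weight $\o$ equals $1$ off the slabs and equals $|B_s|/|S(Q_s)|\le 1$ on each $B_s$ reaching into $Q_t^*$; such $B_s$ correspond precisely to cubes $Q_s$ touching $Q_t$, a finite family, so $\essinf_{Q_t^*}\o=\min_s|B_s|/|S(Q_s)|$ over those neighbors. Using the slab estimate and Whitney comparability ($|Q_s|\simeq|Q_t|$ and $S(Q_s),S(Q_t)$ comparable for touching cubes), this infimum is comparable to $\min_{Q_k\cap Q_t\neq\emptyset}|Q_k|/|S(Q_k)|$. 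The essential supremum of $\bo$ over $Q_t^*$ is governed by the same finite family of neighboring values, all mutually comparable, so {\bf (d)} holds with a dimensional constant $M_1$.

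It remains to check {\bf (e)} and {\bf (f)} with $\ho\equiv 1$. Each $\O_t=Q_t^*$ is a cube, and by Lemma \ref{rectangles} (applied with $B=\lambda I$) its divergence constant equals that of a single reference cube of dilation $1+\varepsilon$ of the unit cube; hence {\bf (e)} holds with $M_2=C_{Q^*}$ depending only on $n$ and $p$. Condition {\bf (f)} for $\ho\equiv 1$ is exactly the strong $(p,p)$ continuity of $T$ from Lemma \ref{Ttreecont}, giving $M_T\le 2(pN/(p-1))^{1/p}$. With {\bf (a)}--{\bf (f)} verified and every constant $N,M_1,M_2,M_T$ depending only on $n$ and $p$, Theorem \ref{Div} produces $\uu\in W_0^{1,p}(\O,\bo)^n$ solving $\di\uu=f$ with $\|D\uu\|_{L^p(\O,\bo)}\le C_{p,n}\|f\|_{L^p(\O)}$, which is the assertion. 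The main obstacle throughout is the explicit geometric construction of the slabs $B_t$: making them simultaneously lie in $Q_t^*\cap Q_{t_p}^*$, carry measure comparable to $|Q_t|$, and stay pairwise disjoint is precisely what forces the particular choice $\varepsilon=2^{-7}$.
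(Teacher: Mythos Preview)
Your overall strategy matches the paper's: apply Theorem~\ref{Div} with $\O_t=Q_t^*$ and $\ho\equiv 1$, and verify {\bf (a)}--{\bf (f)}. The verifications of {\bf (a)}, {\bf (c)}, {\bf (e)}, {\bf (f)} are fine. The genuine gap is in {\bf (d)}, and it stems from your construction of $B_t$.

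You assert that for touching Whitney cubes $Q_s,Q_t$ one has ``$S(Q_s),S(Q_t)$ comparable'', and you use this to conclude that the neighboring values $|Q_k|/|S(Q_k)|$ are all mutually comparable. This is false: the shadow $S(Q_t)$ is defined through the \emph{tree} order, not through Whitney adjacency. Two cubes that touch in $\R^n$ can lie on different branches of the spanning tree (think of a domain whose Whitney adjacency graph contains a cycle), and then one shadow can be arbitrarily larger than the other. Concretely, if a long chain $Q_a,Q_1,\dots,Q_N$ closes up so that $Q_N$ touches $Q_a$, then $|S(Q_a)|=|\O|$ while $|S(Q_N)|=|Q_N^*|$. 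With your slab $B_{t}$ (a full-face neighborhood of $F_t$), many $B_s$ with $s\not\succeq t$ can reach into $\O_t=Q_t^*$, and for those $|B_s|/|S(Q_s)|$ need not dominate $|Q_t|/|S(Q_t)|$; your lower bound on $\essinf_{\O_t}\o$ then collapses.

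The paper repairs this not by comparing shadows of neighbors, but by making $B_t$ much smaller: it is an open cube of side $\tfrac{\varepsilon}{4}l_t$ centered at the \emph{center} $c^t$ of the face $F_t$ (not a slab along the whole face). The point of the choice $\varepsilon=2^{-7}$ is then the stronger statement $B_s\cap \O_r=\emptyset$ for every $r\neq s,s_p$, not merely pairwise disjointness of the $B_t$'s. Consequently, on $\O_t$ the weight $\o$ can drop below $1$ only on $B_t$ and on the $B_s$ with $s_p=t$; in both cases $s\succeq t$, hence $S(Q_s)\subseteq S(Q_t)$ and
\[
\o\big|_{B_s}=\frac{|B_s|}{|S(Q_s)|}\;\ge\;\frac{2^{-9n}|Q_s|}{|S(Q_t)|}\;\ge\;2^{-11n}\,\frac{|Q_t|}{|S(Q_t)|}.
\]
Since $\esssup_{\O_t}\bo\le |Q_t|/|S(Q_t)|$ (each $Q_s$ meeting $\O_t$ touches $Q_t$, so its minimum in the definition of $\bo$ includes $k=t$), this gives {\bf (d)} with $M_1=2^{11n}$, without ever comparing shadows of merely adjacent cubes. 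If you rebuild $B_t$ this way and record the property $B_s\cap\O_r=\emptyset$ for $r\neq s,s_p$, your proof goes through.
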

\begin{proof} Let us observe that $\bo$ is defined almost everywhere but it is sufficient in this context. 

This result is a consequence of Theorem \ref{Div}. Let us consider the decomposition $\{\O_t\}_{t\in\Gamma}$ defined by $\O_t:=Q^*_t$ with $\varepsilon:=2^{-7}$.
From (\ref{Whitney overlapping}), it follows that the overlapping of the subdomains of this collection is bounded by $N=12^n$. 

Now, let us define the collection $\{B_t\}_{a\neq t\in\Gamma}$. Given $t$ in $\Gamma\setminus\{a\}$ and its parent $t_p$, the intersection $Q_t\cap Q_{t_p}$ is a $n-1$ dimensional face of one of those. Let us denote by $c^t$ the center of that $n-1$ dimensional cube and observe that the length of its sides is greater than $l_t/4$, where $l_t$ is the length of the sides of $Q_t$. Thus, if we define the distance $d_\infty(x,y):=\max_{1\leq i\leq n}|x_i-y_i|$ it follows that $d_\infty(c_t,Q_s)\geq l_t/8$ for any $s\neq t,t_p$.
Now, we define $B_t$ as an open cube with center in $c^t$ and sides with length $l$ small enough in order to have the cube $B_t$ included in $Q^*_t\cap Q^*_{t_p}$ and disjoint from $Q^*_s$ for all $s\neq t, t_p$. It is known that $l_{t_p}\geq l_t/4$, then taking $l=\varepsilon l_t/4$ it follows that $B_t\subset Q^*_{t}\cap Q^*_{t_p}$. In particular, as $\varepsilon<1$ it can be seen that  $B_t\subset Q_t\cup Q_{t_p}$. Hence, using that $Q_s^*$ intersects $Q_k$ if and only if $Q_s$ touches $Q_k$, we can assert that if $Q^*_s$ intersects $Q_t\cup Q_{t_p}$ then the common length of the sides of $Q_s$ is lesser than $16l_t$. Thus, $d_\infty(Q^*_s,c^t)\geq l_t/8-16l_t\varepsilon/2$. Thus, it is sufficient to show that $\varepsilon$ verifies that  $l_t/8-16l_t\varepsilon/2> \varepsilon l_t/8$. Hence, $B_t\subset \O_t\cap \O_{t_p}$ and $B_t\cap \O_s=\emptyset$ if  $s\neq t,t_p$, obtaining a collection $\{B_t\}_{t\neq a}$ pairwise disjoint. 
Thus, the collection $\{\O_t\}_{t\in\Gamma}$ of subdomains of $\O$ verifies conditions {\bf (a)} and {\bf (b)} in Subsection \ref{def tree}. 

In order to prove condition {\bf (c)} on page \pageref{c}, we can see that for any $x\in \O$ the ball with center $x$ and radius $\frac{1}{2}d(x,\partial\O)$ intersects only a finite number of $Q_s^*$'s. The condition {\bf (e)} is obtained by observing that $\ho=1$ and the subdomains in the decomposition are cubes thus the constants $C_{\O_t}$ are equal to each other (see Lemma \ref{rectangles}). Finally, condition {\bf (f)} has been proved in Lemma \ref{Ttreecont}, thus it only remains to prove {\bf (d)}.  

Now, given $t\in \Gamma$ , it can be observed that $\o(x)\neq 1$ over $Q_t$ only if $x$ belongs to $B_s$ with $s=t$ or $s_p=t$. Thus, given $x\in Q_t$ it follows that
\begin{eqnarray*}
\o(x)=\left\{
  \begin{array}{l l}
     \dfrac{|B_t|}{|S(Q_t)|}=\dfrac{2^{-9n}|Q_t|}{|S(Q_t)|}& \quad \text{if $x\in B_t$}\\
     \\
     \dfrac{|B_s|}{|S(Q_s)|}=\dfrac{2^{-9n}|Q_s|}{|S(Q_s)|}\geq \dfrac{2^{-11n}|Q_t|}{|S(Q_t)|} & \quad \text{if $x\in B_s$ with $s_p= t$}\\
     \\
     \hspace{0.6cm} 1& \quad \text{otherwise.}
   \end{array} \right.
\end{eqnarray*}
Hence, using that  $\O_t$ is included in $\displaystyle{\bigcup_{Q_s\cap Q_t\neq \emptyset} Q_s}$, it follows that  
\[\esssup_{x\in \O_t}\bo(x)=\max_{Q_s\cap Q_t\neq \emptyset} \esssup_{x\in Q_s} \bo(x) \leq \dfrac{|Q_t|}{|S(Q_t)|} \leq  2^{11n} \essinf_{x\in \O_t} \o(x),\]  
obtaining the condition {\bf (e)} with $M_1=2^{11n}$. Thus, using Theorem \ref{Div} we obtain (\ref{Div general domain}) with \[C_{p,n}= 2C_Q2^{11n}12^{n+n/p}\left(1+\frac{2^{p+1}p}{p-1}\right)^{1/p},\]
where $C_Q$ is the constant in (\ref{Introduction div}) for an arbitrary cube $Q$.

\end{proof}

\section{Divergence problem and Stokes equations on domains with an external cusp}\label{Cuspidal domains}

In this section we show the second application of our decomposition to prove the existence of a weighted solution of $\di\uu=f$ on a class of $n$ dimensional domains with an external cusp arbitrarily narrow. 
Similar domains were studied in \cite{DL2} where the cusp is defined by a power function $x^\gamma$, with $\gamma>1$.

Given a Lipschitz function $\varphi:[0,a]\to \R$ that satisfies the properties:  
\begin{enumerate}
\item[(i)] $\vfi(0)=0$, and $\vfi(r)>0$ if $x\in (0,a]$,
\item[(ii)] $\vfi'(0)=0$, $|\vfi'|\leq K_1$,
\item[(iii)]  $\frac{\vfi(t)}{t}\leq K_2\frac{\vfi(r)}{r}$, for all $0<t<r\leq a$,
\end{enumerate}
we define the following domain with a cusp at the origin:
\begin{equation}\label{domain}
\O_\varphi:=\{(x,y)\in\R\times\R^{n-1}\,:\, 0<x<a\text{ and }|y|< \varphi(x)\}\subset\R^n.
\end{equation}

\bigskip

The following are three examples of functions which verify (i), (ii) and (iii):
\begin{itemize}
\item $\vfi(x)=x^\gamma$, with $\gamma>1$.
\item $\vfi(x)=e^{-1/x^2}$ in $(0,a]$ and $\vfi(0)=0$.
\item $\vfi(x)=x^\gamma (2+\sin(x^{1-\gamma}))$ in $(0,a]$, with $\gamma>1$, and $\vfi(0)=0$.
\end{itemize}

\bigskip

The Lipschitz condition in (ii) keeps $\partial\Omega_\vfi$ from having cusps different from the one at the origin, and the condition (iii) prevents the domain to be of type of ``Rooms and corridors'', where the weight could be worse than the one considered by us. Moreover, it is used to solve a technical issue when $\kappa$ in Theorem \ref{Divergence in cuspidal} is positive.

Let us start introducing a decreasing sequence in the interval $(0,a]$ and some of its properties. This sequence will be used to define a decomposition of $\O_\vfi$. Thus, we define inductively a decreasing sequence $\{x_i\}_{i\geq 0}$ in $(0,a]$ with $x_0=a$, and $x_{i+1}$ the maximum number in $(0,x_i)$ satisfying that $\vfi(x)=x_i-x$. The well definition of this sequence is based on the continuity of $\vfi(x)+x$, which satisfies that $\vfi(0)+0<x_i$ and $\vfi(x_i)+x_i>x_i$, using that $\vfi$ is continuous and positive on $(0,a]$, with $\vfi(0)=0$. In addition, it can be seen that $\{x_i\}_{i\geq 0}$ decreases to 0. Indeed, if $\{x_i\}_{i\geq 0}$ converges to $\bar{x}\geq 0$, then \[\vfi(\bar{x})=\lim_{i\to \infty}\vfi(x_{i+1})=\lim_{i\to \infty} x_i-x_{i+1}=0.\] 
Hence, $\bar{x}=0$.

\begin{figure}[htb]
       \center{\includegraphics[width=80mm]{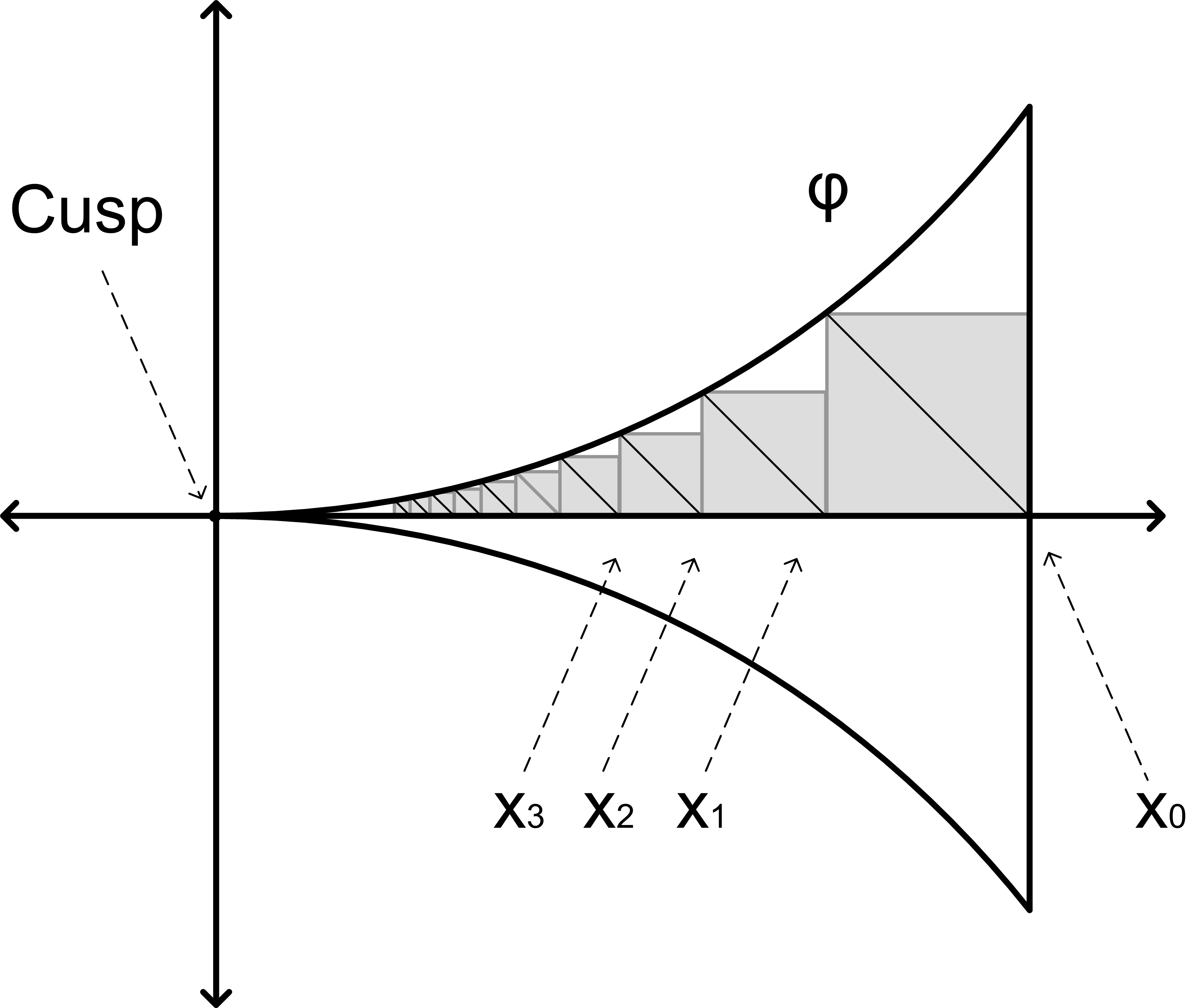}}
        \caption{ A simple example with an increasing $\vfi$}
\end{figure}

Let us see some properties of this sequence. Taking $x_{i+1}\leq x\leq x_i$, we can assert that 
\begin{equation}\label{r}
x_{i+1}\leq x\leq (K_1+1)x_{i+1}
\end{equation} 
and 
\begin{equation}\label{varphi}
\frac{1}{K_2}\vfi(x_{i+1})\leq\vfi(x)\leq (K_1+1)\vfi(x_{i+1}).
\end{equation} 

The first inequality in (\ref{r}) holds by definition and the second one can be proved 
observing that \[x\leq x_i=\vfi(x_{i+1})+x_{i+1}=\dfrac{\vfi(x_{i+1})-\vfi(0)}{x_{i+1}-0}x_{i+1}+x_{i+1}.\]

The second inequality in (\ref{varphi}) follows from
\[|\vfi(x)-\vfi(x_{i+1})|\leq K_1(x-x_{i+1})\leq K_1(x_i-x_{i+1})=K_1\vfi(x_{i+1}),\]
and the first one can be proved by 
\[\vfi(x_{i+1})\leq K_2\vfi(x)\,\dfrac{x_{i+1}}{x}\leq K_2\vfi(x).\]

\bigskip

Now, we introduce the collection $\{\O_i\}_{i\in\N_0}$ of subdomains of $\O_\vfi$ to be used with Theorem \ref{Div}. Indeed, given $i\in \N_0$ we define
\begin{eqnarray}\label{Omegai}
\O_i:=\{(x,y)\in \O_\vfi\,:\, x_{i+2}<x<x_{i}\}.
\end{eqnarray}
Note that in this case $\Gamma=\N_0$ where two vertices $i$ and $j$ are connected by an edge if and only if $|i-j|=1$. Moreover, if we take the root $a=0$, the partial order $\preceq$ inherited from this tree structure coincides with the total order $\leq$ of $\N_0$.

\bigskip

The following theorem is the main result of this section.

\begin{theorem}[Divergence on cuspidal domains]\label{Divergence in cuspidal} Let $\O_\varphi\subset\R^n$ be the domain defined in (\ref{domain}), $1<p<\infty$ and $\kappa\geq0$. Given $f\in L^p(\O,\vp^{-\kappa})$, with vanishing mean value, there exists a solution $\uu$ in $W_0^{1,p}(\O_\vfi,\vp^{1-\kappa})^n$ of $\di \uu= f$ satisfying that 
\begin{equation}\label{normdiv}
\|D\uu\|_{L^p(\O_\vfi,\vp^{1-\kappa})}\leq C \|f\|_{L^p(\O_\vfi,\vp^{-\kappa})},
\end{equation}
where $\vp(x,y)=\dfrac{\varphi(x)}{x}$, and $C$ depends only on $K_1$, $K_2$, $p$, $n$ and $\kappa$.
\end{theorem}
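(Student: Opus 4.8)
The plan is to obtain the statement as a direct application of Theorem~\ref{Div} to the chain decomposition $\{\O_i\}_{i\in\N_0}$ of (\ref{Omegai}), with $\ho:=\vp^{-\k}$ and with the weight $\bo:=\vp$ in the role played by $\bo$ in Theorem~\ref{Div}; with this choice the output weight $\bo\ho=\vp^{1-\k}$ matches (\ref{normdiv}). First I would record the elementary facts. Since $\O_i$ and $\O_j$ overlap only when $|i-j|\le1$, condition \textbf{(a)} holds with $N=2$. Setting $B_i:=\O_i\cap\O_{i-1}=\{(x,y)\in\O_\vfi\,:\,x_{i+1}<x<x_i\}$ produces a pairwise disjoint family of sets of positive measure, so \textbf{(b)} holds; note that $W_i=\bigcup_{j\ge i}\O_j=\{(x,y)\in\O_\vfi\,:\,0<x<x_i\}$. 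Because the $\O_i$ accumulate only at the cusp tip, every point of $\O_\vfi$ has a neighbourhood meeting only finitely many $\O_i$, giving \textbf{(c)}. Finally $L^p(\O_\vfi,\vp^{-\k})\subset L^1(\O_\vfi)$ follows from $\vp\le K_1$, the boundedness of $\O_\vfi$ and H\"older's inequality, so the hypotheses of Theorem~\ref{Div} on $\ho$ are met.

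The geometric core is condition \textbf{(d)}, namely $\esssup_{\O_i}\vp\le M_1\essinf_{\O_i}\o$ for the weight $\o$ of (\ref{TreeWeight}). Using $x_i-x_{i+1}=\vfi(x_{i+1})$ together with (\ref{r}) and (\ref{varphi}) one finds $|B_i|\approx\vfi(x_i)^n$ (with constants depending on $K_1,K_2,n$), while $|W_i|=c_{n-1}\int_0^{x_i}\vfi(t)^{n-1}\,dt$. The crucial step is the upper bound for $|W_i|$: hypothesis (iii) in the form $\vfi(t)\le K_2\,\tfrac{\vfi(x_i)}{x_i}\,t$ for $0<t\le x_i$ gives $|W_i|\le C\,\vfi(x_i)^{n-1}x_i$, hence $\o=|B_i|/|W_i|\ge c\,\vfi(x_i)/x_i$ on $B_i$. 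Since $\vp=\vfi(x)/x$ is comparable to $\vfi(x_i)/x_i$ throughout $\O_i$ by (\ref{r})--(\ref{varphi}), and $\o\neq1$ on $\O_i$ only on $B_i\cup B_{i+1}$ (the latter estimated identically), this yields \textbf{(d)} with $M_1$ depending on $K_1,K_2,n$. This is the essential use of hypothesis (iii).

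For condition \textbf{(e)} I would rescale. The slab $\O_i$ has $x$-width $x_i-x_{i+2}\approx\vfi(x_i)$ and $y$-extent $\approx\vfi(x_i)$, so all of its dimensions are comparable to $\vfi(x_i)$; an almost isotropic affine dilation sends $\O_i$ to a domain of unit size whose top and bottom boundaries are graphs of functions with slope bounded by $K_1$, by (ii). These rescaled domains are uniformly Lipschitz, hence satisfy $\divp$ with a uniform constant, and Lemma~\ref{rectangles} transfers this bound back to $\O_i$ independently of $i$. Since $\vp^{-\k}$ is comparable to the constant $(\vfi(x_i)/x_i)^{-\k}$ on $\O_i$ by (\ref{r})--(\ref{varphi}), the weighted divergence problem on $\O_i$ reduces to the unweighted one, giving \textbf{(e)} with $M_2$ uniform in $i$.

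The main obstacle is condition \textbf{(f)}: the boundedness of the operator $T$ of (\ref{Ttree}) on $L^p(\O_\vfi,\vp^{-\k})$ when $\k>0$ (the case $\k=0$ is Lemma~\ref{Ttreecont}). Writing $F_j:=\int_{B_j}|f|$ and using $W_i=\bigcup_{j\ge i}B_j$ together with the fact that $\vp^{-\k p}$, $|B_i|$ and $|W_i|$ are each essentially constant on the slabs, the estimate $\|Tf\|_{L^p(\O_\vfi,\vp^{-\k})}\le M_T\|f\|_{L^p(\O_\vfi,\vp^{-\k})}$ reduces to a one-dimensional weighted discrete Hardy inequality $\sum_i\alpha_i\big(\sum_{j\ge i}F_j\big)^p\le C\sum_j\gamma_j F_j^p$, with explicit weights $\alpha_i,\gamma_j$ built from $\vfi(x_i)$ and $x_i$. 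Its validity is equivalent to a Muckenhoupt-type balance condition along the chain $\{x_i\}$, and checking that this condition holds uniformly in the geometry of the cusp---again by means of hypothesis (iii), as anticipated in the remark preceding the statement---is the technical heart of the proof. Once \textbf{(f)} is in place, Theorem~\ref{Div} furnishes $\uu\in W_0^{1,p}(\O_\vfi,\vp^{1-\k})^n$ with $\di\uu=f$ and the estimate (\ref{normdiv}), the constant depending only on $K_1,K_2,p,n,\k$.
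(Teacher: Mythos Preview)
Your overall plan coincides with the paper's: apply Theorem~\ref{Div} to the chain decomposition $\{\O_i\}_{i\in\N_0}$ with $\ho=\vp^{-\k}$ and $\bo=\vp$, and verify \textbf{(a)}--\textbf{(f)}. Conditions \textbf{(a)}--\textbf{(d)} and the reduction $L^p(\O_\vfi,\vp^{-\k})\subset L^1(\O_\vfi)$ are handled exactly as you indicate. Two of your steps, however, diverge from the paper's arguments.

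For \textbf{(f)} you set up a discrete weighted Hardy inequality and a Muckenhoupt-type balance condition, calling this ``the technical heart of the proof''. In fact no such machinery is needed. The point is that (iii) reads $\vp(t)\le K_2\,\vp(r)$ for $0<t<r$, so $\vp^{\k}\le K_2^{\k}\vp(x_i)^{\k}$ throughout $W_i$. Inserting $1=\vp^{-\k}\vp^{\k}$ inside the inner integral and pulling out this bound, the factor $\vp(x_i)^{\k}$ cancels (up to constants from (\ref{r})--(\ref{varphi})) against the outer weight $\vp^{-p\k}$ on $B_i$, and what remains is precisely $T(|g|\vp^{-\k})$ in the \emph{unweighted} $L^p$ norm. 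Lemma~\ref{Ttreecont} then gives \textbf{(f)} directly, with $M_T$ depending only on $K_1,K_2,p,\k$. So \textbf{(f)} is a two-line reduction to Lemma~\ref{Ttreecont}, not a separate Hardy/Muckenhoupt analysis.

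For \textbf{(e)} your rescaling idea is in the right spirit, but ``uniformly Lipschitz, hence uniform $\divp$ constant'' hides the work: one still has to exhibit a quantitative structure (e.g.\ uniform star-shapedness or a uniform John condition) from which an explicit bound on $C_{\O_i}$ follows. The paper carries this out in Lemma~\ref{Omegai constant}: each $\O_i$ is split into a fixed number $m=m(K_1,K_2)$ of thin vertical slabs $U_j$, each of which is shown to be star-shaped with respect to a ball whose radius is comparable to its diameter; the explicit Galdi estimate (\ref{Galdi}) then bounds $C_{U_j}$ uniformly, and Corollary~\ref{Rooms and Corridors} assembles these into a uniform bound for $C_{\O_i}$. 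This is where the actual effort in \textbf{(e)} lies.
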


\begin{proof} As we mentioned before this theorem is an application of Theorem \ref{Div}. The collection of subdomains $\{\O_i\}_{i\in \N_0}$ has been defined in (\ref{Omegai}), and the weights $\ho,\bo:\O_\vfi\to\R_{>0}$ are 
defined by $\bo:=\vp$ and $\ho:=\vp^{-\kappa}$. Observe that $\Omega_i$ is indeed a subdomain of $\O_\vfi$. Moreover, $\vp\leq K_1$ and $\kappa\geq 0$, thus $L^p(\O_\vfi,\vp^{-\kappa})\subset L^1(\O_\vfi)$. Then, it just remains to prove the conditions {\bf (a)} to {\bf (f)} on pages \pageref{a} and \pageref{c}.

Just from the definition of the collections of subdomains it can be observed that conditions {\bf (a)} and {\bf (c)} hold, with a constant $N=2$. 
The collection $\{B_i\}_{i\geq 1}$ defined below verifies {\bf (b)}:
\begin{eqnarray*}
B_i:=\O_i\cap\O_{i-1}=\{(x,y)\in \O_\vfi\,:\, x_{i+1}<x<x_i\}. 
\end{eqnarray*}

Now, let us prove {\bf (d)}. The weight $\o$ defined in (\ref{TreeWeight}) is equal to $\o(x,y)=\frac{|B_i|}{|W_i|}$ over $B_i$, where  
\begin{eqnarray*}
W_i&=&\bigcup_{k\geq i}\O_k=\{(x,y)\in \O_\vfi\,:\, x<x_i\}. 
\end{eqnarray*}
Thus, given $(x,y)\in B_i$, for $i\geq 1$, using inequalities (\ref{r}) and (\ref{varphi}), and (iii), it follows that 
\begin{eqnarray*}
\o(x,y)=\dfrac{|B_i|}{|W_i|}\geq \dfrac{C_{K_2,n}\,\vfi(x_{i+1})^n}{C_{K_2,n}\, x_i \vfi(x_i)^{n-1}}
\geq C_{K_1,K_2,n}\dfrac{\vfi(x_i)}{x_i}.
\end{eqnarray*}
Now, given $(x,y) \in B_{i+1}$, and using that $\frac{\vfi(x_{i+1})}{x_{i+1}}\geq \frac{1}{K_1+1}\frac{\vfi(x_i)}{x_i}$ obtained from (\ref{r}) and (\ref{varphi}), we can conclude that 
\begin{eqnarray*}
\o(x,y)=\dfrac{|B_{i+1}|}{|W_{i+1}|}\geq C_{K_1,K_2,n}\dfrac{\vfi(x_{i+1})}{x_{i+1}}\geq C_{K_1,K_2,n}\dfrac{\vfi(x_i)}{x_i}.
\end{eqnarray*}
Hence, using (iii) and the previous inequalities, 
\begin{equation*}
\esssup_{x\in \O_i}\vp(x)\leq K_2 \dfrac{\vfi(x_i)}{x_i}\leq C_{K_1,K_2,n} \essinf_{x\in \O_i} \o(x).
\end{equation*}

Now, let us prove {\bf (f)}, the continuity of the operator $T$ and an estimation of its norm. In order to simplify the notation we denote $\vp(x)$ instead of $\vp(x,y)$. The proof uses (iii), the fact that $\kappa\geq 0$, and the continuity of $T$ without weight shown in Lemma \ref{Ttreecont}. Indeed, 
\begin{eqnarray*}
\int_{\O_\vfi} |Tg(x,y)|^p \vp^{-p\k}(x)&=&\sum_{i\geq 1}\int_{B_i} \vp^{-p\k}(x)\left(\dfrac{1}{|W_i|}\int_{W_i}|g|\vp^{-\k}\vp^{\k}\right)^p\\
&\leq& \sum_{i\geq 1}\int_{B_i}  \vp^{-p\k}(x)K_2^{p\k}\vp^{p\k}(x_i)\left(\dfrac{1}{|W_i|}\int_{W_i}|g|\vp^{-\k}\right)^p\\
&\leq& K_2^{2p\k} \sum_{i\geq 1} \left(\dfrac{\vp(x_i)}{\vp(x_{i+1})}\right)^{p\k}\int_{B_i} \left(\dfrac{1}{|W_i|}\int_{W_i}|g|\vp^{-\k}\right)^p\\
&\leq& C \sum_{i\geq 1}\int_{B_i} \left(\dfrac{1}{|W_i|}\int_{W_i}|g|\vp^{-\k}\right)^p\\
&\leq& C \| g\vp^{-\k}\|^p_{L^p(\O_\vfi)}= C \|g\|^p_{L^p(\O_\vfi,\vp^{-\k})},
\end{eqnarray*}
where $C$ depends only on $K_1$, $K_2$, $p$ and $\k$.

Finally, it just remains to prove {\bf (e)}. Using (iii), (\ref{r}) and (\ref{varphi}), we obtain that 
\[\dfrac{1}{(K_1+1)^2}\dfrac{\vfi(x_i)}{x_i}\leq \dfrac{\vfi(x)}{x}\leq K_2\dfrac{\vfi(x_i)}{x_i},\]
for all $x_{i+2}\leq x\leq x_i$. Thus, we can assume that $\vp^{-\kappa}$ is the constant function with value $\left(\frac{\vfi(x_i)}{x_i}\right)^{-\kappa}$ over $\O_i$. 
Thus, it is enough to prove {\bf (e)} when $\ho=1$. The proof of this case is shown in Lemma \ref{Omegai constant}. 
\end{proof}

The next result follows immediately from Theorem \ref{Divergence in cuspidal}.

\begin{theorem}[Stokes on cuspidal domains]\label{Stokes in Cuspidal} Given $\O_\vfi\subset\R^n$ the domain defined in (\ref{domain}), $h\in L_0^2(\O_\vfi,\vp^{-1})$, and $\gg\in H^{-1}(\O_\vfi)^n$. There exists a unique solution $(\uu,p)\in H_0^1(\O_\vfi)^n\times L^2(\O_\vfi,\vp)$ of (\ref{Stokes}), with $\int_{\O_\vfi} p \vp^2=0$. Moreover, 
\begin{eqnarray*}
\|D\uu\|_{L^2(\O_\vfi)}+\|p\|_{L^2(\O_\vfi,\vp)}\leq C \left(\|\gg\|_{H^{-1}(\O_\vfi)}+\|h\|_{L^2(\O_\vfi,\vp^{-1})}\right),
\end{eqnarray*} 
where $\vp(x,y)=\dfrac{\varphi(x)}{x}$, and $C$ depends only on $\O_\vfi$.
\end{theorem}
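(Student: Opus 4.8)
The plan is to combine Theorem~\ref{Divergence in cuspidal} with the weighted Stokes result quoted above from \cite{DL2}, reducing the general problem (with $h\neq0$) to the incompressible one. The whole argument hinges on matching the weights correctly. I would set $\omega:=\vp^2$, the weight of the cited theorem. Its divergence hypothesis then reads: for every $g\in L^2(\O_\vfi,\omega^{-1/2})=L^2(\O_\vfi,\vp^{-1})$ with vanishing mean value there is $\vv\in H_0^1(\O_\vfi)^n$ with $\di\vv=g$ and $\|D\vv\|_{L^2(\O_\vfi)}\le C\|g\|_{L^2(\O_\vfi,\vp^{-1})}$. This is precisely Theorem~\ref{Divergence in cuspidal} specialized to $p=2$ and $\kappa=1$, since then $\vp^{1-\kappa}=1$ and the solution lands in the unweighted space $H_0^1(\O_\vfi)^n$. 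Moreover $\omega=\vp^2\le K_1^2$ is bounded and positive, so $\omega\in L^1(\O_\vfi)$. Hence all hypotheses of the cited theorem hold with this $\omega$, and its conclusion yields well-posedness of the \emph{incompressible} Stokes problem in $H_0^1(\O_\vfi)^n\times L^2(\O_\vfi,\vp)$ with normalization $\int_{\O_\vfi}p\,\vp^2=0$ and estimate governed by $\|\gg\|_{H^{-1}(\O_\vfi)}$.

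To handle a general $h\in L_0^2(\O_\vfi,\vp^{-1})$, I would first remove the divergence constraint. Applying Theorem~\ref{Divergence in cuspidal} once more, with $p=2$ and $\kappa=1$, to the datum $h$, I obtain a particular field $\uu_0\in H_0^1(\O_\vfi)^n$ solving $\di\uu_0=h$ with
\[\|D\uu_0\|_{L^2(\O_\vfi)}\le C\|h\|_{L^2(\O_\vfi,\vp^{-1})}.\]
Since $\uu_0\in H_0^1(\O_\vfi)^n$, the distribution $\Delta\uu_0$ lies in $H^{-1}(\O_\vfi)^n$ with $\|\Delta\uu_0\|_{H^{-1}(\O_\vfi)}\le\|D\uu_0\|_{L^2(\O_\vfi)}$, so $\tilde{\gg}:=\gg+\Delta\uu_0\in H^{-1}(\O_\vfi)^n$. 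Writing the sought solution as $\uu=\uu_0+\ww$, the pair $(\ww,p)$ must then solve the incompressible system $-\Delta\ww+\nabla p=\tilde{\gg}$, $\di\ww=0$, which is exactly the problem settled in the previous paragraph. This produces $(\ww,p)\in H_0^1(\O_\vfi)^n\times L^2(\O_\vfi,\vp)$ with $\int_{\O_\vfi}p\,\vp^2=0$ and $\|D\ww\|_{L^2(\O_\vfi)}+\|p\|_{L^2(\O_\vfi,\vp)}\le C\|\tilde{\gg}\|_{H^{-1}(\O_\vfi)}$.

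Finally, $\uu:=\uu_0+\ww$ solves $-\Delta\uu+\nabla p=\gg$, $\di\uu=h$, $\uu=0$ on $\partial\O_\vfi$, and combining the two bounds via $\|\tilde{\gg}\|_{H^{-1}(\O_\vfi)}\le\|\gg\|_{H^{-1}(\O_\vfi)}+C\|h\|_{L^2(\O_\vfi,\vp^{-1})}$ gives the stated a priori estimate. Uniqueness follows from the standard energy argument: the difference of two solutions is a divergence-free field in $H_0^1(\O_\vfi)^n$, so testing the momentum equation against it annihilates the pressure term and forces $D\uu=0$, hence $\uu=0$; then $\nabla p=0$ makes $p$ constant, and the normalization $\int_{\O_\vfi}p\,\vp^2=0$ together with $\int_{\O_\vfi}\vp^2>0$ forces $p=0$. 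I expect no genuine difficulty: the only points needing care are the weight bookkeeping that reconciles the $\omega^{\pm1/2}$ convention of \cite{DL2} with the $\vp^{\pm\kappa}$ convention of Theorem~\ref{Divergence in cuspidal} at $\kappa=1$, and the verification that the corrected force $\gg+\Delta\uu_0$ still belongs to $H^{-1}(\O_\vfi)^n$.
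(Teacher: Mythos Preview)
Your proof is correct and follows essentially the same route as the paper: lift the divergence constraint via Theorem~\ref{Divergence in cuspidal} with $p=2$, $\kappa=1$, then apply the quoted theorem from \cite{DL2} with $\omega=\vp^2$ to the resulting incompressible problem with modified force $\gg+\Delta\uu_0$, and add the two pieces. You supply more detail than the paper does---in particular the explicit weight bookkeeping and a separate uniqueness argument---but the structure is identical.
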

\begin{proof} By Theorem \ref{Divergence in cuspidal}, there exists $\tilde{\vv}\in H_0^1(\O_\vfi)^n$ satisfying that $\di\tilde{\vv}=h$ and the estimate (\ref{normdiv}). Then, $\Delta\tilde{\vv}\in H^{-1}(\O_\vfi)^n$. Now, using the Theorem stated on page \pageref{Theorem DL2}, and Theorem \ref{Divergence in cuspidal}, we can conclude that there exists a unique solution $(\vv,p)\in H_0^1(\O_\vfi)^n\times L^2(\O_\vfi,\vp)$ of 
\begin{align}
\begin{cases}
-\Delta\vv+\nabla p&=\gg+\Delta\tilde{\vv}  \quad \text{in $\O$}\\ 
\di\vv&=0  \quad \text{in $\O$},\\
\end{cases}
\end{align}
with $\int_{\O_\vfi}p\vp=0$, and 
\begin{eqnarray*}
\|D\vv\|_{L^2(\O_\vfi)}+\|p\|_{L^2(\O_\vfi,\vp)}&\leq& C \left(\|\gg\|_{H^{-1}(\O_\vfi)}+\|\Delta\tilde{\vv}\|_{H^{-1}(\O_\vfi)}\right)\\
&\leq& C \left(\|\gg\|_{H^{-1}(\O_\vfi)}+\|f\|_{L^2(\O_\vfi,\vp)}\right).
\end{eqnarray*}
Finally, $(\uu,p)=(\vv+\tilde{\vv},p)$ is the solution mentioned in the theorem.
\end{proof}

Next, we prove a lemma used in the proof of Theorem \ref{Divergence in cuspidal}.

\begin{lemma}\label{Omegai constant} Let $\O_i\subset \R^n$ be, with $i\geq 0$, the domain defined on (\ref{Omegai}), and $1<p<\infty$. Then, $\O_i$ satisfies $\divp$ and there exists a constant $C$ depending only on $K_1$, $K_2$, $n$, and $p$ such that \[C_{\O_i}\leq C\] for all $i\geq 0$, where $C_{\O_i}$ is the constant in (\ref{Introduction div}).
\end{lemma}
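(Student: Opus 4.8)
The plan is to reduce each $\O_i$, by an affine change of variables, to a reference domain of unit size whose shape is controlled uniformly in $i$, and then to combine Lemma \ref{rectangles} with the fact that a family of Lipschitz (equivalently John) domains sharing one Lipschitz character satisfies $\divp$ with a common constant. Recall that $\O_i=\{(x,y):x_{i+2}<x<x_i,\ |y|<\vfi(x)\}$, so its extent in the $x$ variable is $h_i:=x_i-x_{i+2}=\vfi(x_{i+1})+\vfi(x_{i+2})$, while its cross-sections are balls of radius $\vfi(x)$. First I would record, from (\ref{r}) and (\ref{varphi}) applied on the two consecutive intervals $[x_{i+2},x_{i+1}]$ and $[x_{i+1},x_i]$, the comparabilities
\[
\frac{1}{K_2(K_1+1)}\,\vfi(x_{i+1})\le \vfi(x)\le K_2(K_1+1)\,\vfi(x_{i+1})\qquad(x_{i+2}\le x\le x_i),
\]
together with $\vfi(x_{i+1})\le h_i\le (1+K_2)\vfi(x_{i+1})$, all constants depending only on $K_1$ and $K_2$.

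Next I would introduce the affine map $F_i(\hat x,\hat y)=(x_{i+2}+h_i\hat x,\ \vfi(x_{i+1})\hat y)$, which carries the reference domain
\[
\hO_i:=\Big\{(\hat x,\hat y):0<\hat x<1,\ |\hat y|<\hat\vfi_i(\hat x)\Big\},\qquad \hat\vfi_i(\hat x):=\frac{\vfi\big(x_{i+2}+h_i\hat x\big)}{\vfi(x_{i+1})},
\]
onto $\O_i$. The estimates above yield uniform bounds $c_1\le \hat\vfi_i\le c_2$ with $c_1,c_2>0$ depending only on $K_1,K_2$, and the Lipschitz bound $|\hat\vfi_i(\hat x)-\hat\vfi_i(\hat x')|\le (K_1 h_i/\vfi(x_{i+1}))|\hat x-\hat x'|\le K_1(1+K_2)|\hat x-\hat x'|$ coming from (ii). Consequently $\{\hO_i\}_{i\ge0}$ is a family of bounded domains of unit size whose cross-sections are balls of radius bounded below by $c_1$ and whose lateral boundary is a uniformly Lipschitz graph; that is, they are Lipschitz domains with a single, $i$-independent Lipschitz character.

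Finally, since bounded Lipschitz domains --- and more generally John domains --- satisfy $\divp$ with a constant depending only on the Lipschitz (John) character, $n$ and $p$ (see the discussion around (\ref{Stokes}) and \cite{ADM}), the whole family $\{\hO_i\}$ satisfies $\divp$ with one common constant $\hat C=\hat C(K_1,K_2,n,p)$. Applying Lemma \ref{rectangles} with $B=\mathrm{diag}(h_i,\vfi(x_{i+1}),\dots,\vfi(x_{i+1}))$, for which $\|B\|_p\|(B^{-1})'\|_p=h_i/\vfi(x_{i+1})\le 1+K_2$, then gives $C_{\O_i}\le (1+K_2)\hat C$, a bound independent of $i$, as claimed. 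The one genuinely non-formal point, and the step I would be most careful about, is the uniformity in the last paragraph: because $\hat\vfi_i$ may oscillate, the $\hO_i$ need not be star-shaped with respect to a ball, so I would not apply Bogovskii's elementary star-shaped result directly but rather rely on the John-domain theory, verifying that the John constant of $\hO_i$ is controlled purely by $c_1$, $c_2$ and the Lipschitz constant of $\hat\vfi_i$, each of which is $i$-independent.
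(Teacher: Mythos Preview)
Your argument is correct, but it follows a genuinely different route from the paper's. The paper does \emph{not} rescale; instead it partitions each $\O_i$ into a fixed number $m-1$ (with $m>16K_1^2K_2^2$) of thin slices $U_j=\{(x,y)\in\O_i:r_{j-1}<x<r_{j+1}\}$ obtained from equidistant points $r_0<\cdots<r_m$ in $[x_{i+2},x_i]$, and shows directly, via an explicit convexity computation, that each $U_j$ is star-shaped with respect to a ball $\hat U_j$ whose radius is comparable to its diameter, both of order $\vfi(x_{i+1})$. Galdi's bound $C_{U}\le C_{n,p}(R/\rho)^{n+1}$ then gives a uniform $\divp$-constant on the $U_j$'s, and Corollary~\ref{Rooms and Corridors} (the paper's own decomposition technique applied to a \emph{finite} chain) assembles these into a uniform constant for $\O_i$. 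So the paper is entirely self-contained: only the elementary star-shaped estimate and the machinery of Section~\ref{DecompositionTechnique} are used. Your approach, by contrast, outsources the hard step to the John-domain theory of \cite{ADM}: after the affine rescaling and Lemma~\ref{rectangles}, you need that the $\divp$-constant of a John domain depends only on its John parameters, $n$ and $p$. That is true and makes your argument shorter, but it imports a substantially heavier external result, whereas the paper's proof doubles as an illustration of its own decomposition corollary. Your caution about $\hO_i$ not being star-shaped is exactly the point the paper addresses by slicing.
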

\begin{proof} $\O_i$ is a Lipschitz domain, and it is well known that a domain of this type satisfies $\divp$. What this lemma states is the existence of a bound for $C_{\O_i}$ independent of $i$.
The idea to prove this result consists in showing that each $\O_i$ can be written as the finite union of certain star-shaped domains with respect to a  ball (the number of domains in the union does not depend on $i$), for which there exists an estimate of their constant, which let us apply Corollary \ref{Rooms and Corridors}. 

Let us recall the definition of this class of domains. A domain $U$ is {\bf star-shaped} with respect to a ball $B$ if and only if any segment with an end-point in $U$ and the other one in $B$ is contained in $U$. This class of domains have the following estimate over the constant on the divergence problem (\ref{Introduction div}): if $R$ denotes the diameter of $U$ and $\rho$ the radius of the ball $B\subset U$, the constant $C_U$ is bounded by 
\begin{equation}\label{Galdi}
C_U\leq C_{n,p} \left(\frac{R}{\rho}\right)^{n+1}.
\end{equation} 
See Lemma III.3.1 in \cite{G} for details.

Let us show the way to split $\O_i$ into the finite union. Without loss of generality, we assume that $K_1,K_2\geq 1$. Thus, from (\ref{r}) and (\ref{varphi}) it follows that 
\begin{eqnarray}\label{Star1}
\vfi(x_{i+1})\leq|x_i-x_{i+2}|\leq 2K_2\vfi(x_{i+1}),
\end{eqnarray} 
and 
\begin{eqnarray}\label{Star2}
\frac{1}{2K_1K_2}\vfi(x_{i+1})\leq \vfi(x)\leq 2K_1K_2\vfi(x_{i+1}),
\end{eqnarray}
if $x$ belongs to $[x_{i+2},x_i]$. Next, we take a natural number $m$ such that $m > 16K_1^2K_2^2$ (this number $m$ could be arbitrarily big but it is fixed) and the $m+1$ equidistant points  $r_0<r_1<\cdots<r_m$, with $r_0=x_{i+2}$ and $r_m=x_{i}$. Thus, for $1\leq j\leq m$, it follows that
\begin{eqnarray}\label{Star3}
\frac{1}{m}\vfi(x_{i+1})\leq |r_j-r_{j-1}|\leq \frac{1}{8K_1^2K_2}\vfi(x_{i+1}).
\end{eqnarray}

Thus, the collection of subdomains of $\O_i$ to be considered is $\{U_1,\cdots, U_{m-1}\}$ defined by 
\[U_j:=\{(x,y)\in \O_i\,:\,r_{j-1}<x< r_{j+1}\},\] 
for $1\leq j\leq m-1$. The tree structure of the index set $\{1,\cdots,m-1\}$ is the same that we have defined on $\N_0$, where $a=1$ is the root in this case. 
Moreover, we introduce the collection $\{B_j\}_{2\leq j \leq m-1}$ as 
\[B_j:=U_j\cap U_{j-1}=\{(x,y)\in \O_i\,:\,r_{j-1}<x< r_j\}.\] 

Thus, let us prove the hypothesis of Corollary \ref{Rooms and Corridors}. Conditions {\bf (a)}, {\bf (b)} and {\bf (c)} follow easily with $N=2$. From (\ref{Star1}), (\ref{Star2}) and (\ref{Star3}), the measure of any $B_j$ and the whole 
$\O_i$ are comparable to $\vfi(x_{i+1})$. Thus, $\o\geq \frac{1}{M_1}$, where $M_1$ depends only on $K_1,K_2,n$.

Finally, we just have to prove {\bf (e)} for $\ho=1$. Let us show first that each $U_j$, for $1\leq j\leq m-1$, is a star-shaped domain with respect to the ball $\hat{ U}_j$ with center $(r_j,0)$ and radius $(r_{j+1}-r_{j-1})/2$.  
Thus, given $(x,y)\in \hat{U}_j$, $(\tilde{x},\tilde{y})\in U_j$, and $s\in (0,1)$, we have to prove that \[|sy+(1-s)\tilde{y}|<\vfi(sx+(1-s)\tilde{x}).\]

\begin{figure}[htb]
       \center{\includegraphics[width=70mm]{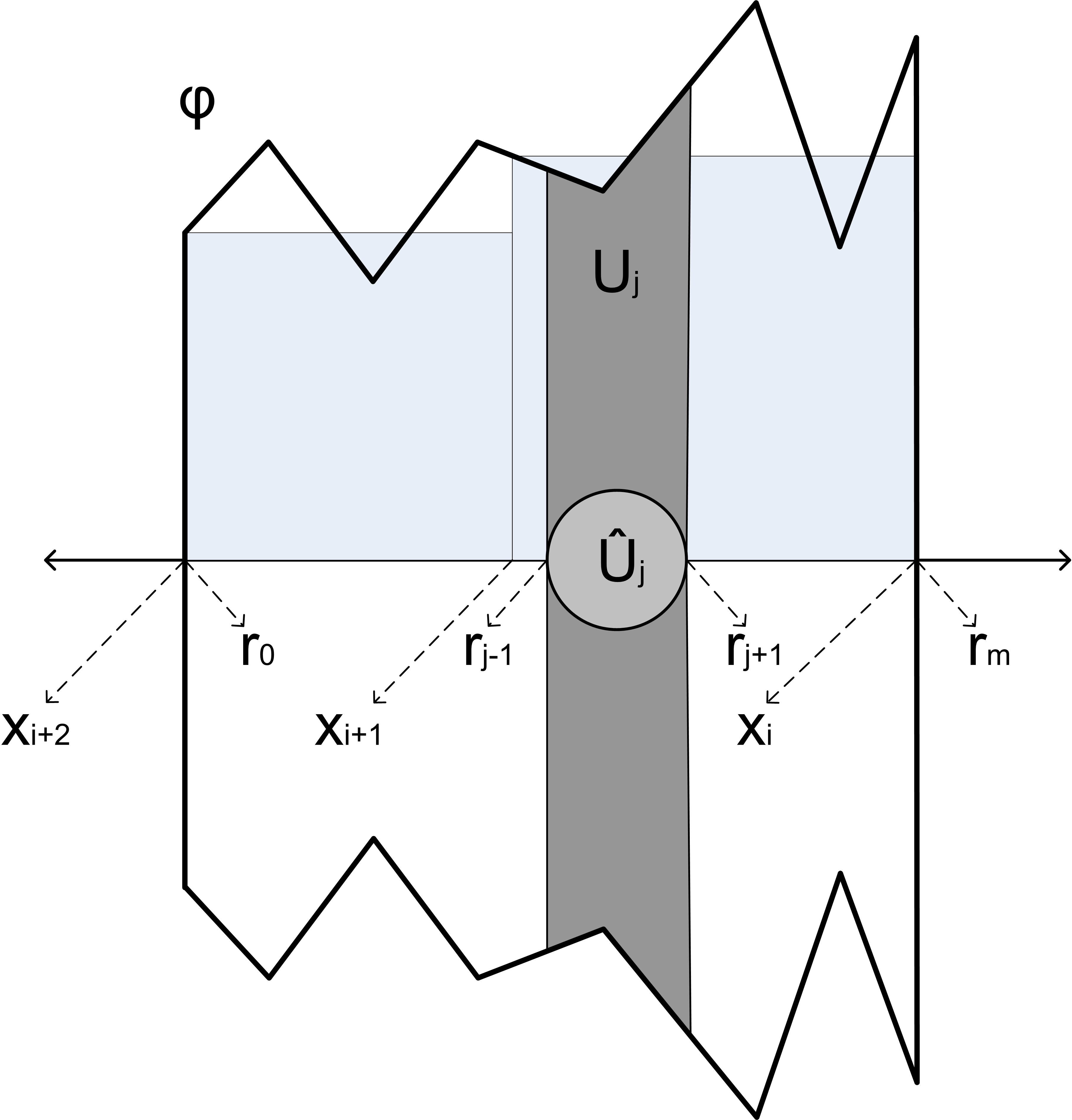}}
        \caption{ A more general $\vfi$ and the star-shaped domain $U_j$}
\end{figure}

To simplify the notation we introduce $M:=\vfi(x_{i+1})/2K_2K_1$.  Thus,
\begin{eqnarray*}
|sy+(1-s)\tilde{y}|&<& s\frac{M}{4K_1}+(1-s)\vfi(\tilde{x})\leq s\left(\frac{M}{4}-M\right)+\vfi(\tilde{x})\\
&\leq& -\frac{3}{4}sM+\left(\vfi(\tilde{x})-\vfi(sx+(1-s)\tilde{x})\right)+\vfi(sx+(1-s)\tilde{x})\\
&\leq& -\frac{3}{4}sM+K_1s|\tilde{x}-x|+\vfi(sx+(1-s)\tilde{x})\\
&\leq& -\frac{3}{4}sM+K_1s\frac{M}{2K_1}+\vfi(sx+(1-s)\tilde{x})< \vfi(sx+(1-s)\tilde{x}).
\end{eqnarray*}
Now, using (\ref{Star2}) and (\ref{Star3}), it can be seen that the diameter of $U_j$ and the radius of $\hat{U}_j$ are comparable to $\vfi(x_{i+1})$. Thus, using estimate (\ref{Galdi}), we can conclude that $M_2$ in {\bf (e)} can be taken as a constant just depending on $K_1$, $K_2$, $n$ and $p$. 
\end{proof}

\section{Divergence problem and Stokes equations on H\"older domains}\label{Holder}

In this section, we show the existence of a right inverse of the divergence operator and the well-posedness of the Stokes equations on an arbitrary bounded H\"older-$\alpha$ domain $\Omega$, i.e. the boundary of $\Omega$ is locally the graph of a function that verifies $|\vfi(x)-\vfi(x')|\leq K_\vfi|x-x'|^\alpha$, for all $x,x'$.
Thus, we start this section studying a domain in $\R^n$ defined by the graph of a positive H\"older-$\alpha$ function $\vfi:(\frac{-3l}{2},\frac{3l}{2})^{n-1}\to\R$, where $0<\alpha\leq 1$ and $l>0$,
\begin{equation}\label{HolderStar}
\O_\varphi:=\left\{(x,y)\in(-l/2,l/2)^{n-1}\times\R\,:\, 0<y< \varphi(x)\right\}\subset\R^n.
\end{equation}
In addition, we assume that $\vfi\geq 2l$ but $\vfi\not\geq 3l$, and $l\leq 1$. Now, $\O$ is locally as $\O_\vfi$, however the distance to the boundary of $\Omega$ is not necessarily equivalent to the distance to the graph of $\vfi$ defined over $(-l/2,l/2)^{n-1}$. Thus, in order to solve this problem, we assume that $\O$ is locally an expanded version of $\O_\vfi$:
\begin{equation}\label{HolderStarExpanded}
\O_{\varphi,E}:=\{(x,y)\in(-3l/2,3l/2)^{n-1}\times\R\,:\, y< \varphi(x)\}\subset\R^n.
\end{equation}
With this new approach of the problem, the distance to $\partial \O$ is equivalent to the distance to $G$ over $\O_\vfi$, where
\begin{equation}\label{HolderGraph}
G:=\{(x,y)\in(-5l/2,5l/2)^{n-1}\times\R\,:\, y= \varphi(x)\}.
\end{equation}

Let us denote the distance to $G$ as $d_G$.

\begin{lemma}\label{HolderStarDiv}
Let $\O_\vfi$ be the domains defined on (\ref{HolderStar}), $1<p<\infty$, and $\kappa\geq 0$.  Given $f\in L^p(\O_\vfi,d_G^{-\kappa})$ with vanishing mean value, there exists a vector field $\uu\in W^{1,p}_0(\O_\vfi,d_G^{1-\alpha-\kappa})^n$  solution of $\di\uu=f$ such that 
\begin{eqnarray*}
\|D\uu\|_{L^p(\O_\vfi,d_G^{1-\alpha-\kappa})}\leq C \|f\|_{L^p(\O_\vfi,d_G^{-\kappa})},
\end{eqnarray*}
where $C$ depends only on $K_\vfi$, $\alpha$, $n$, $p$ and $\kappa$. 
\end{lemma}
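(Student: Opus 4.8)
The plan is to apply the master Theorem~\ref{Div} to $\O_\vfi$ by exhibiting a suitable tree of subdomains together with the two weights $\ho:=d_G^{-\kappa}$ and $\bo:=d_G^{1-\alpha-\kappa}$, exactly as was done for the cuspidal case in Theorem~\ref{Divergence in cuspidal}. First I would construct a Whitney-type decomposition of $\O_\vfi$ adapted to the H\"older graph $G$: a collection of cubes (or dilated cubes $\O_t=Q_t^*$) whose side length $l_t$ is comparable to $\dom$ on each piece, organized into a rooted tree $\Gamma$ by the same chain-connecting construction as in Section~\ref{General domains}, with the root taken at a cube deep inside $\O_\vfi$ (say near the ``top'' where $\vfi\geq 2l$). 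This immediately yields properties \textbf{(a)}, \textbf{(b)} and \textbf{(c)} with an overlap constant $N$ depending only on $n$, and the sets $B_t\subset\O_t\cap\O_{t_p}$ are chosen as in the Whitney proof so that $\{B_t\}_{t\neq a}$ is pairwise disjoint.

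The heart of the argument is verifying conditions \textbf{(d)}, \textbf{(e)} and \textbf{(f)} for the specific weights. For \textbf{(e)} I would observe that on each cube $Q_t$ the distance $d_G$ is comparable to a single value $d_t$, so $\ho=d_G^{-\kappa}$ is essentially constant on $\O_t$ and the weighted $\divp$ estimate on $\O_t$ reduces, via Lemma~\ref{rectangles}, to the uniform constant $C_Q$ for an unweighted cube; this gives $M_2$ depending only on $n,p$. For \textbf{(d)} the key is the quantitative comparison between the combinatorial weight $\o(x)=|B_t|/|W_t|$ from (\ref{TreeWeight}) and the geometric weight $\bo=d_G^{1-\alpha-\kappa}$: here the H\"older exponent $\alpha$ enters because the measure $|W_t|$ of the shadow $W_t=\bigcup_{s\succeq t}\O_s$ must be estimated in terms of $\dom^{\,?}$, and the exponent $1-\alpha$ is precisely what makes $\bo\lesssim\essinf_{\O_t}\o$. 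Condition \textbf{(f)}, the boundedness of $T$ on $L^p(\O_\vfi,d_G^{-\kappa})$, I would prove exactly as in the cuspidal theorem: split $Tf$ over the disjoint $B_t$, pull the nearly-constant weight $d_G^{-\kappa}$ out of the average using the comparability of $d_G$ on $W_t$ versus $B_t$, and reduce to the unweighted continuity already established in Lemma~\ref{Ttreecont}.

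The main obstacle will be the estimate controlling $|W_t|$ (equivalently $|S(Q_t)|$) from above by a power of $\dom$, which is where the H\"older regularity is genuinely used and where the exponent $1-\alpha$ is pinned down. Unlike the cuspidal case, where the one-parameter sequence $\{x_i\}$ made the shadow a simple ``strip'' with an elementary area computation, here the shadow of a Whitney cube near the graph is an $(n-1)$-parameter region whose measure depends on how fast $\vfi$ can vary; the H\"older bound $|\vfi(x)-\vfi(x')|\leq K_\vfi|x-x'|^\alpha$ gives the needed control $|W_t|\lesssim d_t^{\,n-\alpha(n-1)}$ or the analogous sharp power, and matching this against $|B_t|\approx d_t^{\,n}$ produces $\o(x)\gtrsim d_t^{\,\alpha(n-1)}\approx d_G^{\,1-\alpha}$ up to dimensional factors, which is exactly condition \textbf{(d)} with $\bo=d_G^{1-\alpha-\kappa}$. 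Once this geometric lemma is in place, assembling the constant $C=2NM_1M_2(N+2M_T^p)^{1/p}$ from Theorem~\ref{Div} and tracking its dependence on $K_\vfi,\alpha,n,p,\kappa$ is routine.
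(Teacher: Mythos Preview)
Your overall strategy---apply Theorem~\ref{Div} with a Whitney-type tree and verify \textbf{(a)}--\textbf{(f)}---is exactly the paper's. But two concrete points would make your argument fail as written.

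First, a bookkeeping slip: in Theorem~\ref{Div} the left-hand weight is the \emph{product} $\bo\ho$, so with $\ho=d_G^{-\kappa}$ you must take $\bo=d_G^{1-\alpha}$, not $d_G^{1-\alpha-\kappa}$. Condition~\textbf{(d)} then asks that $d_G^{1-\alpha}\lesssim \omega$ on each $\O_t$, with no $\kappa$ involved.

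Second, and more seriously, the generic Whitney decomposition with chain-connected tree from Section~\ref{General domains} does \emph{not} give you control of the shadow $|W_t|$ in terms of the local scale $l_t$: the shadow depends on how the chains were chosen and can be much larger than a column over $Q_t$. The paper instead builds a bespoke ``columnar'' tree: start with a single cube at the bottom and, level by level, either translate each cube straight up or split it into $2^{n-1}$ half-size cubes, according to whether the translated-and-dilated cube still lies under the extended graph. This forces every descendant of $Q_t$ to sit vertically above the $(n-1)$-dimensional base $Q_t'$, so
\[
W_t\subset Q_t'\times\bigl(y_{t,1}-\tfrac{l_t}{2},\ \sup_{x\in Q_t'}\vfi(x)\bigr),
\]
and the H\"older bound then gives $|W_t|\lesssim l_t^{\,n-1}\cdot l_t^{\,\alpha}=l_t^{\,n-1+\alpha}$. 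With $|B_t|\approx l_t^{\,n}$ this yields $\omega\gtrsim l_t^{\,1-\alpha}\approx d_G^{\,1-\alpha}$, which is condition~\textbf{(d)}. Your proposed exponent $|W_t|\lesssim d_t^{\,n-\alpha(n-1)}$ is incorrect, and the claim that $d_t^{\,\alpha(n-1)}\approx d_G^{\,1-\alpha}$ ``up to dimensional factors'' is simply false unless $\alpha=1/n$. The columnar construction also gives the monotonicity $l_s\le l_t$ for $s\succeq t$, which is what lets you pull $d_G^{-\kappa}$ through the average in \textbf{(f)}; a generic Whitney chain tree need not have this property either.
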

\begin{proof} We start defining a collection of cubes in the style of the Whitney cubes, but  in this case the diameter of the cubes is comparable to $d_G$ instead of the distance to $\partial\O_\vfi$. The construction of this collection of open cubes $\{Q_t\}_{t\in\Gamma}$ consists on piling cubes as boxes one over the other one in such a way that the common length of the sides of each cube is comparable to $d_G$. The cubes are constructed level by level starting by level 0, which has just the cube $Q_a=(\frac{-l}{2},\frac{l}{2})^{n-1}\times (0,l)$. The construction of the cubes induces the tree structure of the index set $\Gamma$ where the parents of the index of the cubes in level $m+1$ are the index of the cubes in level $m$. Thus, suppose that we have defined all the cubes in level $m$, and let $Q_t=Q_t'\times(y_{t,1},y_{t,2})$ be one of them. Let us denote by $l_t$ the common length of the sides of $Q_t$. Thus, all the cubes $Q_s$'s in level $m+1$ with $s_p=t$ are defined in the following way: we move up and then expand $Q_t$ to obtain $Q=3(Q_t+(0,\cdots,0,l_t))$ thus
\begin{enumerate}
\item[(i)] if  $Q\subset\O_{\vfi,E}$ there is just one cube $Q_s$ on level $m+1$ such that $s_p=t$ and it is $Q_s=Q_t+(0,\cdots,0,l_t)$,
\item[(ii)] if $Q\not\subset\O_{\vfi,E}$ there are $2^{n-1}$ cubes $Q_s$ with $s_p=t$ and they are written as $Q_s=Q'_{t,1/2}\times(y_{t,2},y_{t,2}+l_t/2)$, where $Q'_{t,1/2}$ is one of cubes in $\R^{n-1}$ obtained by splitting $Q_t'$ into $2^{n-1}$ cubes with length of its sides equal to  $l_t/2$. 
\end{enumerate}
See figure 3 for an example of the construction.

\begin{figure}[htb]
      \center{\includegraphics[width=80mm]{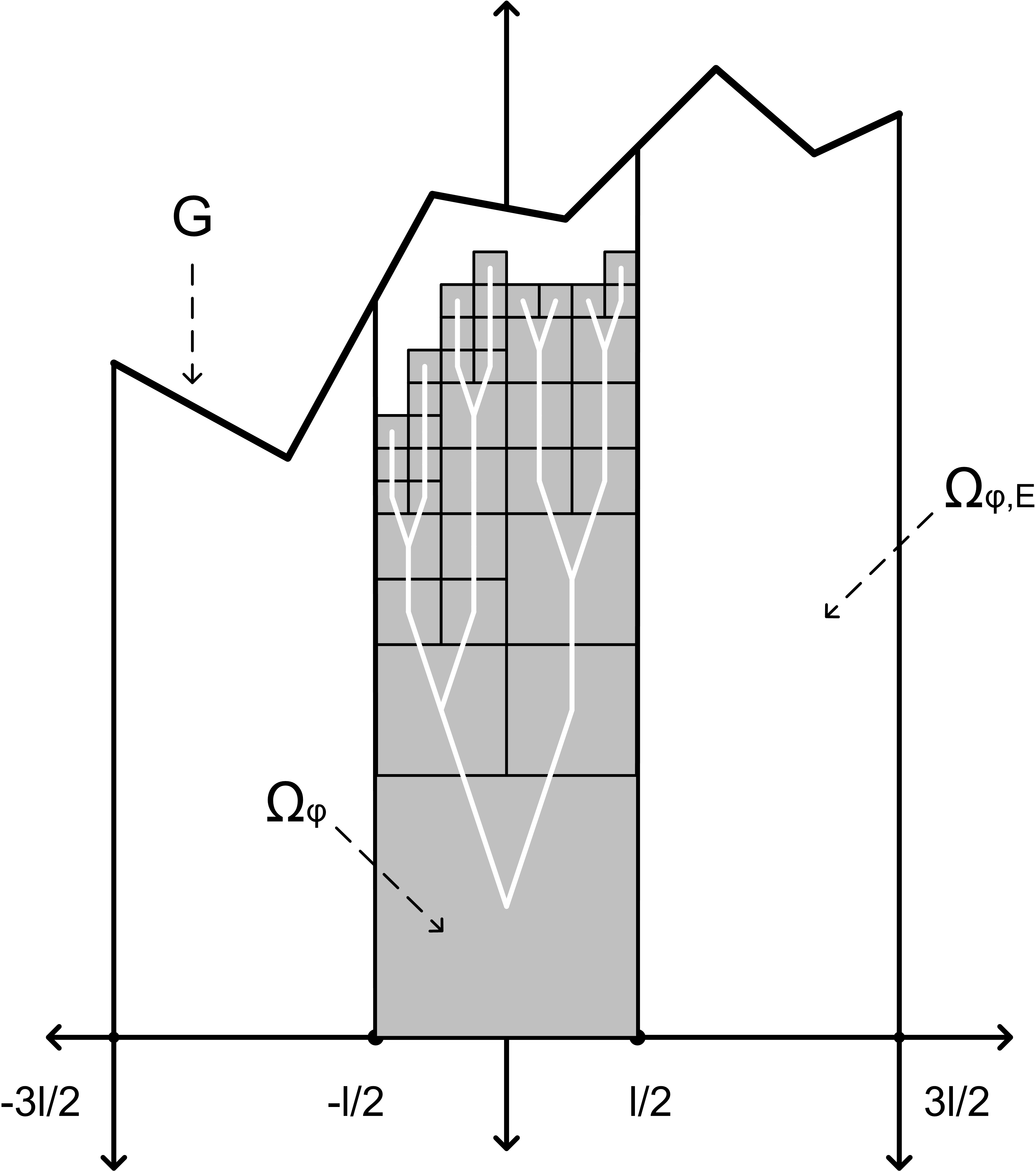}}
      \label{figure3}\caption{ Under the graph of a H\"older-$\alpha$ function}
\end{figure}

Note that the common length of the sides of the cubes $\{Q_t\}_{t\in \Gamma}$ decreases with respect to the order $\succeq$ inherited from the tree. Indeed, 
\begin{eqnarray}\label{Decrease}
l_t\leq l_s \hspace{2cm}\text{ if } t\succeq s.
\end{eqnarray}
Let us show another property satisfied by this collection. Given $Q_t$, it satisfies 
\begin{eqnarray}\label{DistanceCubes}
l_t\leq dist(Q_t,G)\leq C_n\,l_t.
\end{eqnarray}
The first inequality follows from the definition. To prove the second one, note that there exists $Q_s$ with $s\preceq t$ and $l_s=2l_t$ such that $3(Q_s+(0,\cdots,0,l_s))\not\subset\O_{\vfi,E}$. Thus, following the notation 
$Q_s=Q_s'\times(y_{s,1},y_{s,2})$ and $Q_t=Q_t'\times(y_{t,1},y_{t,2})$, we can assert that there exists $x_s\in 3Q_s'$ such that $\vfi(x_s)<y_{s,2}+2l_s\leq y_{t,1}+4l_t.$

On the other hand, $\vfi\geq y_{t,1}+2l_t$ in $Q_t'$, and $Q_t'\subseteq Q'_s$. Thus, using the convexity of $3Q'_s$ and the continuity of $\vfi$, there is another point in $3Q_s'$, let us represent it with the same notation $x_s$, such that 
\begin{eqnarray}\label{Cerca}
y_{t,1}+2l_t\leq\vfi(x_s)\leq y_{t,1}+4l_t. 
\end{eqnarray}

Observe that any point in $Q'_t$ has a distance to $x_s$ lesser than the diameter of $3Q'_s$, which equals $6\sqrt{n-1}\,l_t$. Thus, 

\begin{eqnarray*}
dist(Q_t,G)\leq dist(Q_t,(x_s,\vfi(x_s)))\leq\sqrt{diam(3Q'_s)^2+(3l_t)^2}\leq 3\sqrt{4n-3}\,l_t.
\end{eqnarray*}

Now, we are ready to define the collection $\{\O_t\}_{t\in\Gamma}$ of subdomains of $\O_\vfi$ to apply Theorem \ref{Div}. The first subdomain $\O_a$ is the cube $Q_a$, and the other ones are the $n$ dimensional rectangles defined by  
\begin{eqnarray}\label{NDimensionalRectangles}
\O_t:=Q'_t\times (y_{t,1}-\frac{1}{2}l_t,y_{t,2}),
\end{eqnarray} 
where $Q_t=Q_t'\times(y_{t,1},y_{t,2})$. Observe that $y_{t,2}=y_{t,1}+l_t$.

It can be seen that conditions {\bf (a)} and {\bf (c)} are valid, with a constant $N=2$. Indeed, $\O_t\cap\O_s\neq\emptyset$ for $t\neq s$ if and only if one index is the parent of the other one. 

Next, if we define the collection $\{B_t\}_{t\neq a}$ as 
\[B_t:=\O_t\cap \O_{t_p}=Q'_t\times (y_{t,1}-\frac{1}{2}l_t,y_{t,1}),\] 
{\bf (b)} follows.   

Now, let us show that {\bf (d)} is verified with $\bo:=d_G^{1-\alpha}$. In order to estimate $\o$, we take the point $x_s\in\R^{n-1}$ that verifies (\ref{Cerca}). Then , for all $x_t\in Q'_t$ it follows that  
\begin{eqnarray*}
|\vfi(x_t)| \leq |\vfi(x_t)-\vfi(x_s)|+|\vfi(x_s)|\leq C_{n,\alpha} K_\vfi l_t^\alpha + 4l_t + y_{t,1}.
\end{eqnarray*}

Thus, $|W_t|\leq C_{n,\alpha} (K_\vfi+l_t^{1-\alpha}) l_t^{n-1+\alpha}$ and $|B_t|=\frac{1}{2}l^n_t$. Then, given $s\in\Gamma$ with $s_p=t$, we have that $|W_s|\leq |W_t|$ and $|B_s|\geq 2^{-n}|B_t|$. Thus,  
\begin{eqnarray*}
\omega(x,y) \geq \dfrac{C_{n,\alpha}}{(K_\vfi+l^{n-1+\alpha})}l_t^{1-\alpha}= \dfrac{C_{n,\alpha}}{(K_\vfi+l^{n-1+\alpha})}d_G^{1-\alpha},
\end{eqnarray*}
if $(x,y)$ belong to $\O_t$. Then,
\begin{eqnarray*}
\essinf_{(x,y)\in\O_t}\omega(x,y) \geq \dfrac{C_{n,\alpha}}{(K_\vfi+l^{n-1+\alpha})}l_t^{1-\alpha}\geq \esssup_{(x,y)\in\O_t}\dfrac{C_{n,\alpha}}{(K_\vfi+l^{n-1+\alpha})}d_G^{1-\alpha}(x,y),
\end{eqnarray*}
proving {\bf (d)} with a constant $M_1$ depending only on $K_\vfi$, $n$, $\alpha$ and $l$.

In order to study  {\bf (e)} we take $\ho:=d_G^{-\kappa}$. Using that $\kappa\geq 0$ and $d_G$ is bounded over $\O_\vfi$, it follows that $L^p(\O_\vfi,d_G^{-\kappa})\subset L^1(\O_\vfi)$. Next, from (\ref{DistanceCubes}) we have that $\frac{1}{2}l_t\leq dist(\O_t,G)\leq C_n\,l_t$, hence we can assume that $d_G^{1-\alpha}$ is constant over $\O_t$ reducing the problem to the case $\kappa=0$. Now,  $\O_t$, with $t\neq a$, is a translate of $(0,l_t)^{n-1}\times (0,3l_t/2)$, thus, using Lemma \ref{rectangles}, we can assert that $\O_t$ satisfies $\divp$ with a constant depending only on $n$.

Finally, the condition {\bf (f)} follows from (\ref{Decrease}) and Lemma \ref{Ttreecont}. Moreover, it can be observed that $d_G\leq C_{n}l_s$ in $\O_s$ and $l_s\leq l_t$ if $s\succeq t$, hence $d_G\leq C_{n}l_t$ over $W_t$.

Now, given $g\in L^p(\O_\vfi,d_G^{-\k})$ the function $g d_G^{-\k}$ belongs to $L^p(\O_\vfi)$, and using Lemma \ref{Ttreecont}, we have
\begin{eqnarray*}
\int_\O |Tg|^p d_G^{-p\k}&=&\sum_{t\neq a}\int_{B_t} d_G^{-p\k}\left(\dfrac{1}{|W_t|}\int_{W_t}|g|d_G^{-\k}d_G^{\k}\right)^p\\
&\leq& C_{n,p,\kappa}\sum_{t\neq a}\int_{B_t}  d_G^{-p\k}l_t^{p\kappa}\left(\dfrac{1}{|W_t|}\int_{W_t}|g|d_G^{-\k}\right)^p\\
&=& C_{n,p,\kappa} \sum_{t\neq a} \int_{B_t} \left(\dfrac{1}{|W_t|}\int_{W_t}|g|d_G^{-\k}\right)^p\leq c_{n,p,\kappa} \|gd_G^{-\k}\|_{L^p(\O_\vfi)},
\end{eqnarray*}
proving the lemma.
\end{proof}

\begin{theorem}[Divergence on H\"older-$\alpha$ domains]\label{Divergence in Holder} Let $\O\subset\R^n$ be a bounded H\"older-$\alpha$ domain, and $1<p<\infty$.  Given $f\in L^p_0(\O,d^{-\kappa})$, with $d$ the distance to $\partial\O$ and $\kappa\geq 0$, there exists a vector field $\uu\in W^{1,p}_0(\O,d^{1-\alpha-\kappa})^n$  solution of $\di\uu=f$ with estimate 
\begin{eqnarray*}
\|D\uu\|_{L^p(\O,d^{1-\alpha-\kappa})}\leq C \|f\|_{L^p(\O,d^{-\kappa})},
\end{eqnarray*}
where $C$ does not depend on $f$.
\end{theorem}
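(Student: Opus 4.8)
The plan is to deduce the global theorem on an arbitrary bounded H\"older-$\alpha$ domain $\O$ from the local model result, Lemma \ref{HolderStarDiv}, by a partition-of-unity argument together with the global decomposition machinery of Theorem \ref{tree}. Since $\O$ is H\"older-$\alpha$, its boundary is covered by finitely many open sets $\{V_j\}_{1\le j\le m}$ on each of which, after a rigid motion and rescaling, $\O$ looks like one of the expanded model domains $\O_{\vfi,E}$ from (\ref{HolderStarExpanded}); to these boundary charts I would add one interior piece $V_0$ compactly contained in $\O$. On the interior piece the distance $d$ to $\partial\O$ is bounded below, so the classical solvability of the divergence problem on a nice (e.g. Lipschitz or star-shaped) subdomain applies with harmless weights. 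On each boundary chart, the key point is that the distance $d$ to $\partial\O$ is comparable to the distance $d_G$ to the local graph $G$ of (\ref{HolderGraph}), so Lemma \ref{HolderStarDiv} furnishes a weighted right inverse of the divergence with the correct weight $d^{1-\alpha-\kappa}$ on the right and $d^{-\kappa}$ on the left.

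\emph{The chaining step.} Given $f\in L^p_0(\O,d^{-\kappa})$, I would first use a partition of unity $\{\eta_j\}_{0\le j\le m}$ subordinated to $\{V_j\}$ to write $f=\sum_j f\eta_j$. The obstruction is that each $f\eta_j$ need not have vanishing mean value, so I cannot directly invoke the local lemma piece by piece. This is exactly the role of the decomposition technique: I would set $\O_j:=V_j\cap\O$, verify that the finite collection $\{\O_j\}$ satisfies the overlap condition \textbf{(a)} and the intersection condition \textbf{(b)} (arranging the charts as a finite chain/tree so consecutive charts overlap in a set of positive measure), and apply Theorem \ref{tree}. This produces a corrected decomposition $f=\sum_j (f_j-\tilde f_j)$ where now each $f_j-\tilde f_j$ is supported in $\O_j$ and integrates to zero, with the norm control (\ref{normtree2}). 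Because the index set is \emph{finite}, the operator $T$ is trivially bounded on $L^p(\O,d^{-\kappa})$, so the hypotheses feeding (\ref{normtree2}) are met with $\ho=d^{-\kappa}$.

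\emph{Solving and summing.} For each $j$ I would apply the local solvability result to $f_j-\tilde f_j$: Lemma \ref{HolderStarDiv} on the boundary charts (after the affine change of variables, using Lemma \ref{rectangles} to track how the constant transforms, and using $d\simeq d_G$ to convert the weights) and the classical result on the interior chart. This yields $\uu_j\in W^{1,p}_0(\O_j,d^{1-\alpha-\kappa})^n$ with $\di\uu_j=f_j-\tilde f_j$ and the desired weighted estimate. Extending each $\uu_j$ by zero and setting $\uu:=\sum_j\uu_j$ gives $\di\uu=\sum_j(f_j-\tilde f_j)=f$, and since the sum is finite the triangle inequality in $L^p(\O,d^{1-\alpha-\kappa})$ together with the decomposition bound (\ref{normtree2}) delivers the claimed estimate.

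\emph{Main obstacle.} The technical heart is the comparability $d\simeq d_G$ on each boundary chart and the book-keeping of the affine maps: one must check that passing from the abstract model $\O_\vfi$ of (\ref{HolderStar})--(\ref{HolderStarExpanded}) to the genuine domain $\O$ preserves both the H\"older constant of the defining graph and the equivalence of distance functions uniformly across the finitely many charts, so that the constant $C$ depends only on $\O$ (through $K_\vfi$, $\alpha$, $n$, $p$, $\kappa$ and the finite atlas) and not on $f$. Once this geometric reduction is in place, everything else is a finite assembly and is routine.
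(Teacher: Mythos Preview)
Your proposal is correct and matches the paper's argument closely: the paper also covers $\partial\O$ by finitely many charts $\{U_i\}$ of the model type plus an interior Lipschitz piece $\O_0$, gives $\Gamma=\{0,1,\dots,m\}$ the star tree with root $0$, applies the decomposition of Theorem \ref{tree} (in the guise of Theorem \ref{Div} with a modified condition \textbf{(e)} allowing the two different weights $d^{-\kappa}$ and $d^{1-\alpha-\kappa}$), and then invokes Lemma \ref{HolderStarDiv} on each boundary piece. The only point worth making precise in your write-up is the tree structure: rather than a ``chain'' of boundary charts, take the star rooted at the interior piece $\O_0$, so that every $B_j\subset\O_j\cap\O_0$ lies in a region where $d$ is bounded below --- this is exactly what makes $T$ trivially bounded in the weighted norm and the weight $\o$ bounded below, as you note.
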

\begin{proof} $\O$ is a H\"older-$\alpha$ domain, thus $\partial\O$ is locally the graph of a H\"older-$\alpha$ function, after taking a rigid movement. In fact, we can assume that $\partial\O$ can be covered by a finite collection of open sets $\{U_i\}_{1\leq i\leq m}$ such that $\O_i:=U_i\cap \O$ is in the form (\ref{HolderStar}), where the extended domain in (\ref{HolderStarExpanded}) is the intersection of another open set $V_i\supset U_i$ with $\O$. The reason to consider these $V_i'$s  is just to have $d$ locally comparable to $d_G$. Also, it can be assumed that the finite collection $\{\O_i\}$ is minimal in the sense that for each $1\leq i\leq m$ the set $\O_i\setminus\bigcup_{j\neq i}U_j$ has a positive Lebesgue measure. Now, let us take a Lipschitz domain $\O_0\subset\subset\O$ such that $B_i:=(\O_i\cap U_0)\setminus\bigcup_{j\neq i}U_j$ has a positive Lebesgue measure and $\cup_{i=0}^m\O_i=\O$. 

 Let us define the finite collection $\{\O_i\}_{0\leq i \leq m}$. The tree structure of the index set $\Gamma=\{0,1,\cdots,m\}$ is defined in such a way that two nodes $i$ and $j$ are connected by an edge if and only if one of those is the root $a=0$. Thus, the partial order is given by $i\preceq j$ if and only if $i=0$.

The proof of this theorem follows the idea used to prove Theorem \ref{Div} with a minor difference in condition {\bf (e)}. In this case, the condition has two different weights and it can be stated as: 
given $g\in L^p(\O_t,d^{-\kappa})$ with vanishing mean value there is a solution $\vv\in W_0^p(\O_t,d^{1-\alpha-\kappa})^n$ of $\di\vv=g$ with   
\begin{eqnarray*}
\|D\vv\|_{L^p(\O_t,d^{-\kappa})}\leq M_2 \|g\|_{L^p(\O_t,d^{1-\alpha-\kappa})}
\end{eqnarray*}
for all $t\in\Gamma$, where the positive constant $M_2$ does not depend on $t$. This condition was proved in Lemma \ref{HolderStarDiv}.

Now, note that $\o$ takes finite different positive values, thus the weight $\bo$ can be assumed constant over $\O$ obtaining {\bf (d)}. On the other hand, the operator $T$ has its support in $\O_0$, which is compactly contained in $\O$. Thus, the weight $\ho:=d^{1-\alpha-\kappa}$ can be assumed constant. Hence, from \ref{Ttreecont}, the operator $T:L^p(\O,d^{1-\alpha-\kappa})\to L^p(\O,d^{1-\alpha-\kappa})$ is continuous and {\bf (f)} is satisfied. 

It can be noted that condition {\bf (a)}, {\bf (b)} and {\bf (c)} can be easily verified from the definition of the collection of subdomains and it finiteness. Thus, the proof goes as in Theorem \ref{Div}.
\end{proof}

In the last theorem we show the well-posedness of the Stokes equations on bounded H\"older-$\alpha$ domains.

\begin{theorem}[Stokes on H\"older-$\alpha$ domains]\label{Stokes on Holder} 
Given $\O\subset\R^n$ a bounded H\"older-$\alpha$ domain, and two functions $h\in L_0^2(\O,d^{\alpha-1})$, and $\gg\in H^{-1}(\O)^n$. Then, there exists a unique solution $(\uu,p)\in H_0^1(\O)^n\times L^2(\O,d^{1-\alpha})$ of (\ref{Stokes}) with $\int_\O pd^{2(1-\alpha)}=0$. Moreover, 
\begin{eqnarray}\label{Stokes Estimate 2}
\|D\uu\|_{L^2(\O)}+\|p\|_{L^2(\O,d^{1-\alpha})}\leq C \left(\|\gg\|_{H^{-1}(\O)}+\|h\|_{L^2(\O,d^{\alpha-1})}\right),
\end{eqnarray} 
where $d$ is the distance to $\partial\O$, and $C$ depends only on $\O$.
\end{theorem}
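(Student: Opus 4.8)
The plan is to mirror the argument of Theorem \ref{Stokes in Cuspidal}: I would split off a suitable right inverse of the divergence to reduce the inhomogeneous system ($\di\uu = h$) to an incompressible one, and then invoke the abstract result on page \pageref{Theorem DL2}. The whole proof is therefore an application of Theorem \ref{Divergence in Holder} once the weight exponents are matched correctly.

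First I would apply Theorem \ref{Divergence in Holder} with $p=2$ and $\kappa = 1-\alpha$. Since the source weight there is $d^{-\kappa}=d^{\alpha-1}$ and the target weight is $d^{1-\alpha-\kappa}=d^{0}=1$, this furnishes a field $\tilde{\vv}\in W^{1,2}_0(\O)^n = H_0^1(\O)^n$ with $\di\tilde{\vv}=h$ and
\[\|D\tilde{\vv}\|_{L^2(\O)}\leq C\|h\|_{L^2(\O,d^{\alpha-1})}.\]
In particular $\Delta\tilde{\vv}\in H^{-1}(\O)^n$, with $\|\Delta\tilde{\vv}\|_{H^{-1}(\O)}\leq \|D\tilde{\vv}\|_{L^2(\O)}$ by testing against $H_0^1$ functions.

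Next I would set $\omega:=d^{2(1-\alpha)}$ and verify the hypotheses of the theorem on page \pageref{Theorem DL2}. Because $2(1-\alpha)\geq 0$ and $d$ is bounded on the bounded domain $\O$, the weight $\omega$ is bounded and hence lies in $L^1(\O)$; moreover $\omega^{-1/2}=d^{\alpha-1}$ and $\omega^{1/2}=d^{1-\alpha}$. The solvability of $\di\uu=f$ in $H_0^1(\O)^n$ with the estimate $\|D\uu\|_{L^2(\O)}\leq C\|f\|_{L^2(\O,\omega^{-1/2})}$ required by the abstract theorem is exactly the content of Theorem \ref{Divergence in Holder} for $p=2$, $\kappa=1-\alpha$. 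Thus the abstract theorem applies with forcing $\gg+\Delta\tilde{\vv}\in H^{-1}(\O)^n$ and yields a unique incompressible pair $(\vv,p)\in H_0^1(\O)^n\times L^2(\O,d^{1-\alpha})$ with $\int_\O p\,d^{2(1-\alpha)}=0$, solving $-\Delta\vv+\nabla p=\gg+\Delta\tilde{\vv}$ and $\di\vv=0$, together with $\|D\vv\|_{L^2(\O)}+\|p\|_{L^2(\O,d^{1-\alpha})}\leq C\|\gg+\Delta\tilde{\vv}\|_{H^{-1}(\O)}$.

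Finally I would set $(\uu,p):=(\vv+\tilde{\vv},p)$, so that $\di\uu=\di\vv+\di\tilde{\vv}=h$ and $-\Delta\uu+\nabla p=(\gg+\Delta\tilde{\vv})-\Delta\tilde{\vv}=\gg$, giving a solution of (\ref{Stokes}). Adding the two estimates above and absorbing $\|\Delta\tilde{\vv}\|_{H^{-1}(\O)}\leq C\|h\|_{L^2(\O,d^{\alpha-1})}$ yields (\ref{Stokes Estimate 2}). For uniqueness I would subtract two solutions: the difference solves the homogeneous incompressible Stokes problem with pressure normalized by $\int_\O p\,d^{2(1-\alpha)}=0$, which is forced to be zero by the uniqueness part of the theorem on page \pageref{Theorem DL2}. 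The only delicate point — and the main, though modest, obstacle — is the bookkeeping of the weight exponents: one must check that the single choice $\kappa=1-\alpha$ simultaneously places $\tilde{\vv}$ in the \emph{unweighted} space $H_0^1(\O)^n$ and makes $\omega=d^{2(1-\alpha)}$ an admissible (bounded, $L^1$) weight for which Theorem \ref{Divergence in Holder} supplies precisely the divergence estimate demanded by the abstract result.
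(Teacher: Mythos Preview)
Your proposal is correct and follows essentially the same approach as the paper, which simply states that the proof ``follows the same idea as the proof of Theorem \ref{Stokes in Cuspidal}.'' You have carried out exactly that adaptation: apply Theorem \ref{Divergence in Holder} with $p=2$, $\kappa=1-\alpha$ to lift $h$, then invoke the abstract result on page \pageref{Theorem DL2} with $\omega=d^{2(1-\alpha)}$ for the incompressible problem, and finally add the two pieces; your explicit check that $\omega\in L^1(\O)$ and your uniqueness argument are the only details beyond what the paper spells out.
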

\begin{proof} The proof of this theorem follows the same idea as the proof of Theorem \ref{Stokes in Cuspidal}.
\end{proof}


\begin{thebibliography}{EHJ}

\bibitem{ADL}{\sc G. Acosta, R. Dur\'an, and A. Lombardi},
{\it Weighted Poincar\'e and Korn inequalities for H\"older $\alpha$ domains},  Math. Meth. Appl. Sci. (MMAS) {\bf 29}(4) (2006), pp. 387-400.

\bibitem{ADLg}{\sc G. Acosta, R. Dur\'an, and F. L\'opez Garc\'{\i}a},
{\it Korn inequality and divergence operator: counterexamples and optimality of weighted estimates},  
Proc. of the AMS., {\bf 141} (2013), pp. 217-232.

\bibitem{ADM} {\sc G. Acosta, R. G. Dur\'an, and M. A. Muschietti},
{\it Solutions of the divergence operator on John Domains},
Advances in Mathematics {\bf 206}(2) (2006), pp. 373-401.

\bibitem{ASV} {\sc D. N. Arnold, L. R. Scott, and M. Vogelius}, {\it 
Regular inversion of the divergence operator with Dirichlet boundary conditions on a polygon}, 
Ann. Scuola Norm. Sup. Pisa Cl. Sci. (4) {\bf 15}(2) (1988), pp. 169-192. 

\bibitem{B} {\sc M. E. Bogovskii},
{\it Solution of the first boundary value problem for the equation
of continuity of an incompressible medium}, Soviet Math. Dokl.
{\bf 20} (1979), pp. 1094-1098.

\bibitem{DRS} {\sc L. Diening, M. Ru{\v{z}}i{\v{c}}ka, and K. Schumacher},
{\it A decomposition technique for John domains},
Ann. Acad. Sci. Fenn. Math. {\bf 35} (2010), pp. 87-114.

\bibitem{D} {\sc J. Duoandikoetxea}, {\it Fourier analysis}, American Mathematical Society, Providence, RI, 2001.

\bibitem{DL1}{\sc R. Dur\'an, and  F. L\'opez Garc\'{\i}a}, {\it Solutions of the divergence and
    analysis of the Stokes equations in planar domains},
    Math. Mod. Meth. Appl. Sci. {\bf 20}(1) (2010), pp. 95-120.

\bibitem{DL2}{\sc R. Dur\'an, and F. L\'opez Garc\'{\i}a},
{\it Solutions of the divergence and Korn inequalities on domains
with an external cusp}, Ann. Acad. Sci. Fenn. Math. {\bf 35}(2) (2010), pp. 421-438. 

\bibitem{DMRT}{\sc R. G. Dur\'an, M. A. Muschietti, E. Russ, and P. Tchamitchian},
{\it Divergence operator and Poincar\'e inequalities on arbitrary bounded domains},
Complex Var. Elliptic Equ. {\bf 55}(8-10) (2010), pp. 795-816.

\bibitem{DM}{\sc R. G. Dur\'an, and M. A. Muschietti}, {\it An explicit right inverse of the divergence operator which is continuous in weighted norms}, Studia Math. {\bf 148}(3) (2001), pp. 207-219. 

\bibitem{F}{\sc K. O. Friedrichs}, {\it On the boundary value problem of the theory of elasticity and Korn's inequality}, Ann. of Math. {\bf 48}(2) (1947), pp. 441-471. 

\bibitem{G}{\sc G. P. Galdi}, {\it An introduction to the mathematical theory of the Navier-Stokes equations}, Springer Monographs in Mathematics, Springer, New York, 2011. 

\bibitem{L}{\sc O. A. Ladyzhenskaya}, {\it The mathematical theory of viscous incompressible flow}, Gordon and Breach, New York-London-Paris, 1969.

\bibitem{S}{\sc K. Schumacher}, {\it Solutions to the equation $div\,u=f$ in weighted Sobolev spaces}, Technical Report 2511,  FB Mathematik, TU Darmstadt, 2007.

\bibitem{St}{\sc E. M. Stein}, {\it Singular integrals and differentiability properties of functions},  
Princeton Mathematical Series, No. 30 Princeton University Press, Princeton, N.J. 1970. 

\end{thebibliography}
\end{document}